\def\draft{n}
\newtheorem{theorem}{Theorem}[section]
\newtheorem{proposition}{Proposition}[section]
\theoremstyle{definition}
\newtheorem{lemma}[proposition]{Lemma}
\newtheorem{remark}[proposition]{Remark}
\newtheorem{conjecture}[proposition]{Conjecture}
\newtheorem{problem}[proposition]{Problem}
\newtheorem{example}[proposition]{Example}
\newtheorem{claim}[proposition]{Claim}
\def\printname#1{
        \if\draft y
                \smash{\makebox[0pt]{\hspace{-0.5in}
                        \raisebox{8pt}{\tt\tiny #1}}}
        \fi
}
\newlength{\standardunitlength}
\long\def\@makecaption#1#2{%
     \vskip 10pt

\setbox\@tempboxa\hbox{
       \small\sf{\bfcaptionfont #1. }\ignorespaces #2}%
     \ifdim \wd\@tempboxa >\captionwidth {%
         \rightskip=\@captionmargin\leftskip=\@captionmargin
         \unhbox\@tempboxa\par}%
       \else
         \hbox to\hsize{\hfil\box\@tempboxa\hfil}%
     \fi}
\font\bfcaptionfont=cmssbx10 scaled \magstephalf
\newdimen\@captionmargin\@captionmargin=2\parindent
\newdimen\captionwidth\captionwidth=\hsize
\newcommand{\tr}{\operatorname{tr}}
\def\lbl#1{\label{#1}\printname{#1}}
\def\BZ{\mathbb Z}
\def\BP{\mathbb P}
\def\BQ{\mathbb Q}
\def\BR{\mathbb R}
\def\BC{\mathbb C}
\def\BH{\mathbb H}
\def\BK{\mathbb K}
\def\calT{\mathcal T}
\def\calG{\mathcal G}
\def\a{\alpha}
\def\l{\lambda}
\def\Ga{\Gamma}
\def\ga{\gamma}
\def\e{\epsilon}
\def\Ga{\Gamma}
\newcommand{\ZZ}{\mathbb{Z}}
\newcommand{\CC}{\mathbb{C}}
\newcommand{\sll}{\mathfrak{sl}_2(\CC)}
\def\longto{\longrightarrow}
\def\SL{\mathrm{SL}}
\def\PSL{\mathrm{PSL}}
\def\pt{\partial}
\def\CS{\mathrm{CS}}
\def\Vol{\mathrm{Vol}}
\def\m{\mu}
\def\CS{\mathrm{CS}}
\def\Hom{\mathrm{Hom}}
\def\SL{\mathrm{SL}}
\begin{document}


\title[Rationality of the $\SL(2,\BC)$-Reidemeister torsion in dimension 3]{
Rationality of the $\SL(2,\BC)$-Reidemeister torsion in dimension 3}
\author{Jerome Dubois}
\address{Institut de Math\'ematiques de Jussieu \\
Universit\'e Paris Diderot--Paris 7 \\
UFR de Math\'ematiques\\
Case 7012, B\^atiment Chevaleret\\
2, place Jussieu\\
75205 Paris Cedex 13
FRANCE \\ \newline
        {\tt \url{http://www.institut.math.jussieu.fr/$\sim$dubois/ }}} 
\email{dubois@math.jussieu.fr}
\author{Stavros Garoufalidis}
\address{School of Mathematics \\
         Georgia Institute of Technology \\
         Atlanta, GA 30332-0160, USA \newline 
         {\tt \url{http://www.math.gatech.edu/~stavros }}}
\email{stavros@math.gatech.edu}
\thanks{S.G. was supported in part by NSF. \\
\newline
1991 {\em Mathematics Classification.} Primary 57N10. Secondary 57M25.
\newline
{\em Key words and phrases: knots, $A$-polynomial, Reidemeister torsion, 
volume, character variety, 3-manifolds, hyperbolic geometry, 
invariant trace field.
}
}

\date{February 10, 2010 }


\begin{abstract}
If $M$ is a finite volume complete hyperbolic 3-manifold with one cusp and no
$2$-torsion,
the geometric component $X_M$ of its $\SL(2,\BC)$-character
variety is an affine complex curve, which is smooth at the discrete faithful
representation $\rho_0$. Porti defined a non-abelian Reidemeister torsion in a 
neighborhood of $\rho_0$ in $X_M$ and observed that it is an analytic map,
which is the germ of a unique rational function on $X_M$. In the present
paper we prove that (a) the torsion of a representation lies
in at most quadratic extension of the invariant trace field of the
representation, and (b) the existence of a polynomial relation of the 
torsion of a representation and the trace of the meridian or the 
longitude. We postulate that the coefficients of the $1/N^k$-asymptotics
of the Parametrized Volume Conjecture for $M$ are elements of the field
of rational functions on $X_M$.  
\end{abstract}

\maketitle

\tableofcontents

\section{Introduction}
\lbl{sec.intro}

\subsection{The volume  of an $\SL(2,\BC)$-representation and the 
$A$-polynomial}
\lbl{sub.volume}

A well-known numerical invariant of a 3-dimensional finite volume 
hyperbolic manifold $M$ with a cusp is its {\em volume}, a positive real 
number. A complete invariant of the hyperbolic structure of $M$ is
a discrete faithful representation of $\pi_1(M)$ into $\PSL(2,\BC)$ 
(well-defined
up to conjugation) which is also a topological invariant, as follows
from Mostow rigidity Theorem.
Every $\PSL(2,\BC)$-representation $\rho$ of $\pi_1(M)$ has a real-valued 
volume $\Vol(\rho)$; see \cite[Ch.2]{Dn} and also \cite{F,FK}.
When a representation varies in a 1-parameter family $\rho_t$, 
the variation of the volume $\frac{d}{dt}\Vol(\rho_t)$ depends only
on the restriction of $\rho_t$ to the boundary torus $\pt M$. This is
a general principle of Atiyah-Patodi-Singer, and in our special case it
also follows from Schalfi's formula. This raises the question:
which $\PSL(2,\BC)$-representations of $\pt M$ extend to a representation of
$M$? The answer is given by an algebraic condition between the
eigenvalues of a meridian and longitude of $\pt M$. This condition
is the vanishing of the so-called \emph{$A$-polynomial} of $M$; see 
\cite{CCGLS}.
The $A$-polynomial of $M$ encodes important informations about
\begin{itemize}
\item[(a)]
the hyperbolic
geometry of $M$, and determines the variation of the volume of the hyperbolic
structure of $M$.
\item[(b)]
the topology of $M$ and more precisely about the
slopes of incompressible surfaces in the knot complement, as follows from 
Culler-Shalen theory; see \cite{CCGLS}.
\end{itemize}
More recently, the $A$-polynomial (or rather, its extension that 
includes the images of all components of the character variety)
is conjecturally linked in two different ways to a {\em quantum knot 
invariant}, namely the {\em colored Jones polynomials} of 
a knot in 3-space (for a definition of the latter, which we will not
use in the present paper, see \cite{Tu} and \cite{GL1}):

\begin{itemize}
\item[(a)]
There is an $A_q$-polynomial in two $q$-commuting variables which 
encodes a minimal order linear $q$-difference equation for the sequence
of colored Jones polynomials; see \cite{GL1}. The AJ Conjecture of \cite{Ga1}
states that when $q=1$, the $A_q$-polynomial coincides with the $A$-polynomial.
\item[(b)]
There is a parametrized version of the Volume Conjecture which links
the variation of the limit in the Volume Conjecture to the $A$-polynomial;
see \cite{GM,GL2}.
\end{itemize}

Aside from conjectures, the following result of \cite{DG} and \cite{BZ}
(based on foundational work of Kronheimer-Mrowka)
shows that the $A$-polynomial detects the unknot.

\begin{theorem}\cite{BZ,DG}
\lbl{thm.nontrivialA}
The $A$-polynomial of a nontrivial knot in 3-space is nontrivial.
\end{theorem}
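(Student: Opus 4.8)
The plan is to recognize that the $A$-polynomial of $K$ is nontrivial exactly when the $\SL(2,\BC)$-character variety $X(M)$ of the complement $M=S^3\setminus K$ possesses a positive-dimensional component carrying irreducible characters whose restriction to the peripheral torus $\pt M$ is nonconstant; equivalently, writing $m,\ell$ for the eigenvalues of a meridian $\mu$ and longitude $\lambda$, the image of this component in the eigenvalue plane $(\BC^*)^2$ is a curve other than the abelian line $\{\ell=1\}$. So I must first produce enough irreducible $\SL(2,\BC)$-representations of $\pi_1(M)$ with genuinely varying peripheral holonomy, and then show that the resulting boundary image is one-dimensional and distinct from $\{\ell=1\}$.

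For a hyperbolic knot the representations are free of charge: Thurston's deformation theory makes the geometric component a curve on which the cusp holonomy varies, so the boundary restriction is already nonconstant there. To treat every nontrivial knot uniformly I would instead draw the representations from the foundational gauge-theoretic work of Kronheimer-Mrowka. For a nontrivial $K$ their resolution of the Property~P problem produces irreducible $\SU(2)$-representations of $\pi_1(M)$; by varying the Dehn filling slope --- that is, imposing $\rho(\mu^p\lambda^q)=1$ for infinitely many coprime $(p,q)$ --- one obtains irreducible representations realizing infinitely many distinct peripheral characters. Since $\SU(2)\subset\SL(2,\BC)$, these are irreducible $\SL(2,\BC)$-characters of $\pi_1(M)$.

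Because $X(M)$ has finitely many components while infinitely many peripheral characters are realized, some component $X_0$ must carry irreducible characters on which the peripheral restriction is nonconstant. Restricting to $\pt M$ and lifting to the eigenvalue variety $(\BC^*)^2$ then yields a one-dimensional image $C$, whose defining polynomial is a factor of the $A$-polynomial. The remaining, and decisive, point is to see that $C$ is not contained in the abelian line $\{\ell=1\}$ already swept out by the reducible characters.

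This last separation is where I expect the real difficulty. If $\ell\equiv 1$ along $X_0$, then each $\rho(\lambda)$ would be trivial (an eigenvalue-$1$ element of $\SU(2)$ is the identity), so the whole family would factor through $\pi_1$ of the $0$-surgery and $\lambda$ would lie in the kernel of a continuum of irreducible representations with varying meridian holonomy. I would rule this out by showing the longitude is not universally killed, invoking Culler-Shalen theory: the alternative forces every ideal point of $X_0$ to detect a closed essential surface disjoint from $\pt M$, a configuration incompatible with the peripheral variation supplied by the Kronheimer-Mrowka representations. The deep input, namely the existence of the irreducible $\SU(2)$-representations, I would cite rather than reprove; the genuinely technical heart of the argument is this extraction of a one-dimensional boundary image lying off the abelian line.
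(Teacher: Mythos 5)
First, a point of reference: the paper itself never proves Theorem \ref{thm.nontrivialA} --- it is quoted from \cite{BZ,DG} --- so your proposal has to be judged against those papers, and its skeleton (Kronheimer--Mrowka $\SU(2)$-representations, then a pigeonhole over the finitely many components of the character variety) is indeed the skeleton of \cite{DG}. The problem is that the two steps carrying all the weight are asserted rather than proved, and the repair you sketch for the second would fail. The claim that varying the filling slope produces ``infinitely many distinct peripheral characters'' is \emph{not} a consequence of Kronheimer--Mrowka: a single irreducible representation $\rho$ with $\rho(\lambda)=I$ and $\rho(\mu)$ of finite order $d$ satisfies $\rho(\mu^p\lambda^q)=1$ for \emph{every} slope $p/q$ with $d\mid p$, and more generally one peripheral eigenvalue pair $(m,\ell)$ whose coordinates are roots of unity satisfies $m^p\ell^q=1$ for infinitely many coprime $(p,q)$. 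So, a priori, finitely many representations could witness all of the Kronheimer--Mrowka slopes, and your pigeonhole never gets started. What closes this gap is a fact your proposal never invokes: the meridian normally generates $\pi_1(M)$, since filling along $\mu$ returns $S^3$. This gives (i) if $\rho(\mu)=I$ then $\rho$ is trivial; and (ii) an arithmetic argument: restrict to the slopes $1/q\in(0,2]$; if only finitely many peripheral pairs occurred, the pairs occurring infinitely often would consist of roots of unity of some bounded order $N$, and then for large $q$ divisible by $N$ one would get $\ell^q=1$, hence $m=m\ell^q=1$, hence $\rho(\mu)=I$ (an $\SU(2)$-matrix with eigenvalue $1$ is the identity), contradicting (i) and irreducibility.

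Second, the ``decisive point'' --- ruling out that the curve traced out lies in $\{\ell=1\}$ --- is exactly where your proposed argument breaks. If the eigenvalue image of $X_0$ were the line $\{\ell=1\}$ with $\tr_\mu$ nonconstant, then at an ideal point of $X_0$ the trace function that stays bounded is $\tr_\lambda$, so Culler--Shalen theory detects an essential surface with boundary slope $\lambda$, \emph{not} a closed essential surface; and essential surfaces of boundary slope $0$ exist for every knot (Seifert surfaces), so there is no incompatibility with anything to exploit there. The efficient fix is again the slopes $1/q$: the relations $\rho(\mu\lambda^q)=1$ and $\rho(\lambda)=I$ together force $\rho(\mu)=I$, hence $\rho$ trivial, so the Kronheimer--Mrowka characters for these slopes automatically have $\ell\neq 1$. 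Then some component $X_0$ contains infinitely many of them; its image in $(\BC^*)^2$ is constructible and at most one-dimensional \cite{CCGLS}, so its closure is an irreducible curve containing infinitely many points off $\{\ell=1\}$, hence distinct from $\{\ell=1\}$, and it therefore contributes a nontrivial factor to the $A$-polynomial.
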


\subsection{The $\SL(2,\BC)$-character variety of $M$ and its 
field of rational functions}
\lbl{sub.rational}

For historical reasons that simplify the linear algebra, it is useful
to consider $\SL(2,\BC)$ (rather than $\PSL(2,\BC)$)-representations of
$\pi_1(M)$. In the rest of the paper, $M$ will denote a finite volume
hyperbolic 3-manifold with one cusp, such that the homology of $M$
contains no $2$-torsion.
In this case, the discrete faithful representation of $M$ lifts to a 
$\SL(2,\BC)$-representation $\rho_0\colon \pi_1(M) \to \SL(2,\BC)$; 
see \cite{Cu}.
To understand how the $\SL(2,\BC)$-representation $\rho_0$ of $\pi_1(M)$ varies,
we consider the unique component $X_M$ of the 
$\SL(2,\BC)$-{\em character variety} of $M$ that contains $\rho_0$.
It is well-known that $X_M$ is an affine curve defined over $\BQ$
and that $\rho_0$ is a smooth point of $X_M$; see \cite{CCGLS}. Moreover,
the coordinate ring $\BQ[X_M]$ is generated by $\tr_{\ga}$ for all $\ga \in
\pi_1 M$, where $\tr_{\ga}$ is the so called \emph{trace-function} defined by:
\begin{equation}
\lbl{eq.trgamma}
\tr_{\ga}:X_M \longto \BC, \qquad \tr_{\ga}(\rho)=\tr(\rho(\ga)).
\end{equation}
Here $\tr(A) = \sum_i a_{ii}$ denotes the trace of a square matrix 
$A = \left( a_{ij}\right)$. Let $\BQ(X_M)$
denote the field of rational functions of $X_M$.
For a detailed discussion on character varieties, the reader may consult
Shalen's survey~\cite{Sh} and also ~\cite[Sec.10]{BDR-V}
and \cite{CCGLS,Go}.

\subsection{The Reidemeister torsion of an $\SL(2,\BC)$-representation}
\lbl{sub.taupoly}

Another important numerical invariant of a representation of a manifold is its 
{\em Reidemeister torsion}, which comes in several
combinatorial or analytic flavors, see Milnor's survey \cite{Mi} or Turaev's 
monograph \cite{TM} for details.
Combinatorially,   the Reidemeister torsion is defined in terms of ratios of
determinants of matrices assigned to based, acyclic complexes, which themselves
are associated with a cell decomposition of a manifold and an acyclic 
representation. One can define torsion for all (not
necessarily acyclic) representations of a manifold as an element of a 
top  exterior power of a twisted (co)homology group, and one can obtain
a complex number after choosing a basis for the twisted (co)homology.
Porti~\cite{Po} defined a Reidemeister torsion for the adjoint 
representation associated to an $\SL(2,\BC)$-representation 
$\rho$ of $\pi_1(M)$ when $\rho$ is in a {\em neighborhood} $U$
of $\rho_0 \in X_M$.  Such representations are not acyclic and a basis
for the twisted homology (and thus the torsion) 
depends on an {\em admissible curve} $\ga$, i.e., 
a simple closed curve $\ga$ in $\pt M$ which is not nullhomologous in 
$\pt M$ (see Porti's monograph~\cite[Chap. 3]{Po} for details). 
Thus, the {\em non--abelian Reidemeister torsion} is a map:
\begin{equation}
\lbl{eq.torsion}
\tau_{\ga}\colon U \longto \BC.
\end{equation}
Moreover, Porti~\cite{Po} observed that $\tau_\gamma$ is an analytic map.
In addition Porti obtain the following result.

\begin{theorem}
\lbl{thm.1}\cite[Thm.4.1]{Po}
For every admissible curve $\ga$, the non-abelian Reidemeister torsion 
$\tau_{\gamma}: U \longto \BC$ is the germ of a unique element
of $\BQ(X_M)$, which is regular at $\rho_0$.
\end{theorem}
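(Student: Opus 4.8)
The plan is to reduce the statement to two essentially independent facts: that the combinatorial formula for $\tau_\ga$ exhibits it as a rational function of the matrix entries of the representation $\rho$, and that this function is conjugation--invariant and therefore descends to a rational function on $X_M$. Fix a finite CW-structure on $M$ and consider the chain complex $C_*(\widetilde M;\ZZ)\otimes_{\ZZ[\pi_1 M]}\sll$ twisted by the adjoint representation $\mathrm{Ad}\circ\rho$. The Reidemeister torsion of this complex, relative to the geometric cell basis and to a chosen basis of the twisted homology, is by definition an alternating product of determinants of minors of the boundary maps together with the base-change determinants for the homology. I would first record that all these matrices have entries that are regular (indeed polynomial) functions on the representation variety $R(M)$ of $\SL(2,\BC)$-representations of $\pi_1(M)$: the entries of $\mathrm{Ad}\,\rho(g)$ are quadratic polynomials in the entries of $\rho(g)$, and the boundary maps are $\ZZ$-linear combinations of such blocks coming from the attaching maps of the CW-structure.

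The non-acyclicity is where the real work lies, and I would address it next. On a neighborhood $U$ of $\rho_0$ in $X_M$ the twisted homology has locally constant dimension (here $\dim_\BC H_1=\dim_\BC H_2=1$, by Porti's computation together with Poincar\'e duality for $M$), so the ranks of the boundary maps are locally constant and one may select, once and for all on $U$, a fixed set of maximal nonvanishing minors used to split off images and cycles. The distinguished homology bases attached to the admissible curve $\ga$ are built from the $\mathrm{Ad}\,\rho(\ga)$-invariant vector $P_\ga\in\sll$ (the $H_1$-class being represented by $\ga\otimes P_\ga$ and the $H_2$-class by the fundamental class of $\pt M$ with coefficient $P_\ga$); since $P_\ga$ is cut out by the linear system $(\mathrm{Ad}\,\rho(\ga)-\mathrm{Id})P_\ga=0$, it depends rationally on $\rho$. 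Consequently the torsion is, on $U$, a ratio of determinants of matrices with entries rational in the coordinates of $R(M)$, i.e. a rational function on $R(M)$ regular over the preimage of $U$. Because Reidemeister torsion is invariant under conjugation of the representation, this function is constant on $\PSL(2,\BC)$-orbits, so by invariant theory---recall that $\BQ[X_M]$ is generated by the trace functions $\tr_\ga$ and that $X_M$ is the categorical quotient---it is the pullback of an element of $\BQ(X_M)$, which is the desired rational function.

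It remains to prove uniqueness and regularity at $\rho_0$, both of which are formal. Since $X_M$ is a single irreducible affine curve, a nonzero element of $\BQ(X_M)$ has isolated zeros and poles, so two rational functions that agree with the analytic germ $\tau_\ga$ on the nonempty open set $U$ have a difference that vanishes on $U$ and hence vanishes identically; this gives uniqueness. For regularity, $\rho_0$ is a smooth point of $X_M$ and $\tau_\ga$ is analytic, hence finite, near $\rho_0$; a rational function on a smooth curve that is bounded near a point cannot have a pole there, so the germ is regular at $\rho_0$.

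The main obstacle I anticipate is the uniform control of the non-acyclic torsion over all of $U$: one must guarantee that a single choice of minors and of the homology splitting works simultaneously for every $\rho\in U$, not merely at $\rho_0$, and that the resulting expression is genuinely independent, up to the base-change determinants, of those choices. A secondary point is descending the $\BC$-coefficient answer to the field $\BQ(X_M)$ defined over $\BQ$: this requires checking that the integral structure of the CW chain complex and the $\BQ$-rationality of the invariant vector $P_\ga$ keep all determinants inside $\BQ(X_M)$ rather than merely $\BC(X_M)$, together with the normalization that fixes the sign ambiguity inherent in the torsion.
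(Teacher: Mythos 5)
Your proposal is correct in substance, but it follows a genuinely different route from the paper's. The paper never argues on the representation variety: it fixes an ideal triangulation compatible with the complete structure, passes to the dual spine, and works over the explicit shape field $\BK=\BQ(z_1,\dots,z_s,e_\l,e_\m)=\BQ(\overline{X}_{M,\SL(2,\BC)})$ of the \emph{augmented} character variety, using the gluing-equations parametrization (Lemmas \ref{lem.ch5} and \ref{lem.ch6}, resting on the Macbeath/Maclachlan--Reid coefficient-field lemma) to realize the family $\rho_t$ by matrices with entries in $\BK$; it then verifies claim by claim that the twisted chain complex, the invariant vector $P^{\rho_t}$, and the homology bases $\mathbf{h}^t_1,\mathbf{h}^t_2$ are defined over $\BK$, and finally descends from $\BQ(\overline{X}_{M,\SL(2,\BC)})$ to $\BQ(X_M)$ because the torsion does not depend on the augmentation $z$. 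You instead stay on $R(M,\SL(2,\BC))$, where the entries of $\mathrm{Ad}\,\rho(g)$ are honest polynomials, and descend to $X_M$ by conjugation-invariance. Both descents are legitimate, but note that yours needs more than ``$X_M$ is the categorical quotient and $\BQ[X_M]$ is generated by traces'': an invariant \emph{rational} function descends because the generic fiber of $R\to X_M$ over the geometric component is a single orbit (irreducible representations with equal characters are conjugate, as recalled in Section \ref{sub.four}); this is a Rosenlicht-type statement, not a formal consequence of the categorical quotient property. Two smaller points you flag should be settled as follows: conjugation-invariance of your concrete formula holds because the torsion is insensitive to rescaling $P_\ga$ (the scalar cancels between the bases of $H_1$ and $H_2$), and the ``orthonormal basis'' of $\sll$ causes no rationality loss since $\chi(M)=0$ makes the torsion independent of the choice of coefficient basis. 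As for what each approach buys: yours is more self-contained and avoids all hyperbolic machinery (Epstein--Penner decompositions, gluing varieties, quaternion-algebra lemmas), and applies to any component containing an irreducible $\ga$-regular representation; the paper's detour through shape fields is exactly what powers Theorems \ref{thm.11} and \ref{thm.2}, since it controls the field of definition of the torsion \emph{pointwise} --- at most a quadratic extension of the trace field of each generic representation --- which bare rationality over $\BQ(X_M)$ does not obviously deliver at representations far from $\rho_0$, where regularity of the rational function is not known.
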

In Section \ref{sub.thm1} we will give an independent proof of Theorem
\ref{thm.1}, which we need for the main results of our paper. 
To phrase our results, recall that the {\em trace field} $\BQ(\rho)$ of
an   $\SL(2,\BC)$-representation $\rho$ of $M$ is the field $\BQ(\tr_g(\rho)|
g \in \pi_1(M))$. For an admissible curve $\ga$, let $\{e_{\ga}(\rho),
e_{\ga}(\rho)^{-1}\}$
denote the eigenvalues of $\rho(\ga)$. Observe that the field 
$\BQ(\rho)(e_{\ga}(\rho))$ is at most a quadratic extension of the 
trace field of $\rho$. Our next theorem uses the notion
of a {\em generic representation}, defined in Section \ref{sec.model}. Note
that this is a Zariski open condition, and that the discrete faithful
representation is generic   (\emph{regular} in the language of Porti's work).

\begin{theorem}
\lbl{thm.11}
For every admissible curve $\ga$ and every generic representation $\rho$,
$\tau_{\ga}(\rho)$ lies in the field $\BQ(\rho)(\e_{\ga}(\rho))$. 
In particular, $\tau_{\ga}(\rho_0)$ lies in the trace field of $M$.
\end{theorem}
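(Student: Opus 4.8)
The plan is to extract an explicit formula for $\tau_\ga$ from the combinatorial model of Section~\ref{sec.model} and to determine the field generated by each of its ingredients. First I would fix a finite presentation (or triangulation) of $\pi_1(M)$ and write $\tau_\ga$ as a ratio of determinants of the twisted boundary operators of the chain complex $C_*(M;\mathrm{Ad}\rho)$, computed with respect to the cell basis together with a chosen generator of the one-dimensional twisted homology $H_1(M;\mathrm{Ad}\rho)$; the genericity of $\rho$ is exactly what guarantees that this homology is one-dimensional and that the model applies. After fixing a basis of $\sll$, every entry of $\mathrm{Ad}\rho(g)$ is a $\BZ$-polynomial in the entries of the matrix $\rho(g)$, so each determinant is a polynomial in the matrix entries of finitely many $\rho(g_i)$.

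The torsion is invariant under conjugation of $\rho$, so each such determinant is a conjugation-invariant function of the matrices $\rho(g_i)$. By the classical description of the ring of $\SL(2,\BC)$-invariants (Fricke, Procesi), any such function is a polynomial in the trace functions $\tr_g(\rho)$, and these take values in $\BQ(\rho)$. The single ingredient that is not manifestly a trace function is the chosen generator of $H_1(M;\mathrm{Ad}\rho)$: following Porti, it is built from the $\mathrm{Ad}\rho(\ga)$-invariant vector of $\sll$ attached to the admissible curve $\ga$, normalized by means of the eigenvector of $\rho(\ga)$. This normalization is the unique place where a non-trace, $\ga$-dependent quantity enters, and it contributes a factor lying in $\BQ(\rho)(\e_\ga(\rho))$. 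Assembling the two, $\tau_\ga(\rho)$ is a rational expression in trace values and in $\e_\ga(\rho)$, and hence lies in $\BQ(\rho)(\e_\ga(\rho))$.

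For the final assertion, recall that at $\rho_0$ the hyperbolic structure on $M$ is complete, so $\rho_0$ sends every peripheral element to a parabolic matrix; in particular any admissible curve $\ga\subset\pt M$ has $\e_\ga(\rho_0)=\pm1\in\BQ$, whence $\BQ(\rho_0)(\e_\ga(\rho_0))=\BQ(\rho_0)$, which is exactly the trace field of $M$.

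I expect the main obstacle to be the invariance step of the second paragraph: a priori the determinants are written in non-invariant matrix entries, and one must verify that the non-invariant data cancels and reassembles into trace functions together with the single eigenvalue factor. I would resolve this either by passing to an explicit normal form for $\rho$ in which $\rho(\ga)$ and a second generator have entries visibly in $\BQ(\rho)(\e_\ga(\rho))$, so that every matrix entry already lies in this field, or by applying the invariant-theoretic description determinant by determinant.
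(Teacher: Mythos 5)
Your proposal assembles the right ingredients, but the route you present as primary misidentifies where the eigenvalue actually enters, and the route you relegate to a fallback is in fact the paper's proof. The concrete error is in your third-paragraph claim about the basing of $H_1$: the generator is built from the $\mathrm{Ad}\circ\rho$-invariant vector $P^{\rho}\in\sll$, and this vector is found by solving a linear system over the field generated by the matrix entries of $\rho$ (concretely, $P^{\rho}$ is proportional to the traceless part of $\rho(\ga)$); no eigenvector or eigenvalue is needed, and moreover the torsion is independent of the choice --- in particular of any normalization --- of $P^{\rho}$, as the paper notes citing \cite{Db}. So no factor in $\BQ(\rho)(\e_{\ga}(\rho))$ beyond $\BQ(\rho)$ can enter through the basing. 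A symptom that something is off: if your Fricke--Procesi argument could be completed, it would give $\tau_{\ga}(\rho)\in\BQ(\rho)$ with no extension at all; it also quietly presupposes Theorem \ref{thm.1} (that $\tau_{\ga}$ is a rational function of traces defined over $\BQ$) together with regularity of that rational function at the given character, which is not part of what ``generic'' (non-elementary, $\ga$-regular) guarantees. And, as you yourself flag, the individual determinants are not conjugation-invariant, so there is no ``determinant by determinant'' version of the invariant-theory step.

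The genuine source of the quadratic extension is the field of definition of $\rho$ itself, which is your fallback and is how the paper argues. The trace field $\BQ(\rho)$ is conjugation-invariant, but $\rho$ cannot in general be conjugated into $\SL(2,\BQ(\rho))$; the obstruction is a quaternion algebra over $\BQ(\rho)$, and the key input is Lemma \ref{lem.ch2} (Macbeath \cite{Ma}, Maclachlan--Reid \cite{MR}): a non-elementary group is conjugate into $\SL(2,K)$ with $K=\BQ(\rho)(e)$ at most quadratic over $\BQ(\rho)$, where $e$ is an eigenvalue of a loxodromic element --- for generic $\rho$ one may take $\rho(\ga)$, and at $\rho_0$ the peripheral elements are parabolic so $K$ is the trace field. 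Once $\rho$ is so conjugated, every ingredient of the torsion computation (Fox derivatives of the relators, the boundary maps, $P^{\rho}$, the reference generators $\mathbf{h}_1,\mathbf{h}_2$, the determinants) lies in $K$, hence $\tau_{\ga}(\rho)\in K$; this is exactly how the paper deduces the theorem from the proof of Theorem \ref{thm.1} together with Lemmas \ref{lem.ch3} and \ref{lem.ch5}. Be aware that your phrase ``entries visibly in $\BQ(\rho)(\e_{\ga}(\rho))$'' understates the point: putting $\rho(\ga)$ and one further generator in normal form does not by itself control the entries of all the other elements of $\rho(\pi_1(M))$; that the whole group lands in $\SL(2,K)$ is precisely the cited, nontrivial lemma, and your proof must invoke it. Your final step (parabolicity at $\rho_0$ gives $\e_{\ga}(\rho_0)=\pm 1$, so the field collapses to the trace field) is correct and matches the paper.
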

Note that since the homology of $M$ has no $2$-torsion, the trace
field of $M$ coincides with its invariant trace field; 
see \cite[Thm.2.2]{NR}.
Our next theorem shows that $\tau_{\ga}$ is an {\em algebraic function} of 
$\tr_{\ga}$. This follows easily from the fact that $\tau_{\ga}$
and $\tr_{\ga}$ are rational functions on $X_M$ and that $\BQ(X_M)$
has transcendence degree $1$, since $X_M$ is an affine
curve defined over $\BQ$.  

\begin{theorem}
\lbl{thm.2}
For every admissible curve $\ga$, there exists a polynomial $T_{\ga}(\tau,y) 
\in \BZ[\tau,y]$, called the $T_\ga$-polynomial, so that 
\begin{equation}
T_{\ga}(\tau_{\ga},\tr_{\ga})=0.
\end{equation}
\end{theorem}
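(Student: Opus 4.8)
The plan is to deduce the relation purely from the field-theoretic structure of $\BQ(X_M)$, using Theorem \ref{thm.1} as the only nontrivial input. First I would record that $X_M$ is an irreducible affine curve defined over $\BQ$, so its field of rational functions $\BQ(X_M)$ is a finitely generated extension of $\BQ$ of transcendence degree exactly $1$. By Theorem \ref{thm.1}, the torsion $\tau_{\ga}$ is the germ of a well-defined element of $\BQ(X_M)$; and by the definition in \eqref{eq.trgamma}, the trace function $\tr_{\ga}$ lies in $\BQ[X_M]\subset\BQ(X_M)$. Thus both $\tau_{\ga}$ and $\tr_{\ga}$ are honest elements of the single field $\BQ(X_M)$.

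Next I would invoke the elementary fact that any two elements of a field of transcendence degree $1$ over $\BQ$ are algebraically dependent over $\BQ$: were $\tau_{\ga}$ and $\tr_{\ga}$ algebraically independent, the subfield $\BQ(\tau_{\ga},\tr_{\ga})$, and hence $\BQ(X_M)$, would have transcendence degree at least $2$, a contradiction. Consequently there is a nonzero polynomial $P(\tau,y)\in\BQ[\tau,y]$ with $P(\tau_{\ga},\tr_{\ga})=0$ as an identity in $\BQ(X_M)$. Clearing denominators, i.e.\ multiplying $P$ by a common denominator of its finitely many rational coefficients, produces the desired $T_{\ga}(\tau,y)\in\BZ[\tau,y]$ with $T_{\ga}(\tau_{\ga},\tr_{\ga})=0$.

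Finally I would check that this field identity really is the equation of germs asserted in the statement. Since $\tau_{\ga}$ and $\tr_{\ga}$ agree with genuine rational functions on $X_M$ in a neighborhood $U$ of $\rho_0$, the vanishing of $T_{\ga}(\tau_{\ga},\tr_{\ga})$ in $\BQ(X_M)$ forces $T_{\ga}(\tau_{\ga}(\rho),\tr_{\ga}(\rho))=0$ for every $\rho\in U$ at which both functions are regular, which is exactly the claimed relation. There is essentially no serious obstacle here; the real content is carried entirely by Theorem \ref{thm.1}, which promotes $\tau_{\ga}$ from a merely analytic map to a rational function on $X_M$. The only points requiring minor care are the passage from abstract algebraic dependence over $\BQ$ to integrality of the coefficients and to the pointwise identity on $U$, both of which are routine. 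If a canonical choice of $T_{\ga}$ were desired one could further take $P$ to be irreducible and primitive, but the statement as given requires only existence.
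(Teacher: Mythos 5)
Your proposal is correct and follows essentially the same route as the paper: both deduce the relation from Theorem \ref{thm.1} together with the fact that the character variety is an affine curve defined over $\BQ$, so its function field has transcendence degree $1$ and any two elements of it (here $\tau_{\ga}$ and $\tr_{\ga}$) are algebraically dependent. The only cosmetic difference is that the paper's proof phrases the argument on $\overline{X}_{M,\SL(2,\BC)}$ rather than $X_M$, and leaves the clearing of denominators to pass from $\BQ[\tau,y]$ to $\BZ[\tau,y]$ implicit, a step you spell out correctly.
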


Let us make some remarks regarding Theorems \ref{thm.1} and \ref{thm.2}.

\begin{remark}
\lbl{rem.TA}
The dependence of the torsion function $\tau_{\ga}$
on $\gamma$ is determined by the $A$-polynomial; see Equation \eqref{eq.tauml}. 
Thus, $T_{\ga}$ is determined by $T_{\mu}$ and the $A$-polynomial of $M$. 
Moreover, if we let $\{e_\mu(\rho), e_\mu^{-1}(\rho)\}$ (resp. $\{e_\lambda(\rho), e_\lambda^{-1}(\rho)\}$) de the eigenvalues for the meridian $\mu$ (resp. longitude $\lambda$) at $\rho$, that is to say, if
\[
e_\mu(\rho) + e_\mu^{-1}(\rho) = \mathrm{tr}_\mu(\rho) \text{ and } e_\lambda(\rho) + e_\lambda^{-1}(\rho) = \mathrm{tr}_\lambda(\rho)
\]
then one has (see~\cite[Thm.4.1]{Po}):
\[
\tau_\lambda = \frac{e_\mu}{e_\lambda} \cdot \frac{\partial e_\lambda}{\partial e_\mu} \cdot \tau_\mu
\]
In particular, at the discrete faithful representation $\rho_0$,
we have:
\begin{equation}
\lbl{eq.comparecusp}
\tau_\lambda(\rho_0)=\mathfrak{c}\cdot  \tau_\mu (\rho_0)
\end{equation}
where $\mathfrak{c}$ is the {\em cusp-shape}. This holds since near $\rho_0$ we have
$A(1+t+O(t)^2,-1+ \mathfrak{c} \, t+O(t^2))=0$ where 
$A(M,L)$ is the $A$-polynomial.
\end{remark}

\begin{remark}
\lbl{rem.alg}
Theorem \ref{thm.1} is an instance of a well-recorded phenomenon: 
many classical and quantum invariants of knotted 3-dimensional objects 
are algebraic. For a detailed discussion regarding conjectures and facts,
see \cite{Ga2}. For a quick explanation of the algebricity in dimension 3,
see Section \ref{sub.explanation} below. 
\end{remark}

\subsection{Examples}
\lbl{sub.examples}

In this section, we illustrate Theorem \ref{thm.2} for the complement
of the figure eight knot $4_1$, and the complement of the $5_2$ knot.

\begin{example}
\lbl{ex.1}
Consider the complement $M$  of the figure eight knot $4_1$ with a 
meridian-longitude system $(\mu,\l)$. The non--abelian Reidemeister torsion 
(with respect to the longitude $\l$) on the character variety $X_M$ is given 
by (see~\cite{Po} or~\cite{Db}):
$$
\tau_\lambda = \sqrt{17 + 4 \tr_\lambda}.
$$
with the convention that we choose the positive square root near the
discrete faitfhul representation $\rho_0$ with $\tr_{\l}(\rho_0)=-2$   (see~\cite[Cor.2.4]{Ca}).
Thus $T_{\l}(\tau_{\l},\tr_{\l})=0$ where
$$
T_\lambda(x, y) = 17 + 4 y - x^2.
$$
Let $\tr_{\l}=e_\l+e_\l^{-1}$, $\tr_{\mu}=e_\m+e_\m^{-1}$. The vanishing of 
the $A$-polynomial for the figure eight knot gives us the following identity (see \cite{CCGLS}):
$$
A(\e_\l, e_\m) = -2 + (e_\m^4 + e_\m^{-4}) - (e_\m^2 + e_\m^{-2}) + (e_\l + e_\l).
$$
Thus, we obtain:
$$
\tr_\lambda = \tr_\mu^4 - 5\tr_\mu^2 + 2.
$$
For details, see~\cite{Po,DHY}. On the other hand, the torsion with respect
to the meridian is given by (see Equation \eqref{eq:changecurve}):
$$
\tau_\mu = 
\tau_\lambda \cdot \left( \frac{\tr_\lambda^2 - 4}{\tr_\mu^2 - 4}
\right)^{1/2} \cdot \frac{\partial \tr_\mu}{\partial \tr_\lambda}
= \frac{1}{2}\sqrt{(\tr_\mu^2 - 5)(\tr_\mu^2 - 1)}.
$$
Thus $T_{\mu}(\tau_\mu,\tr_{\mu})=0$ where
$$
T_\mu(\tau, z) = -5 + 6z^2 - z^4 + 4\tau^2.
$$
At the discrete faithful representation $\rho_0$, we have 
$\tr_{\l}(\rho_0)=-2$ (see~\cite[Cor.2.4]{Ca}) and $\tr_{\mu}(\rho_0)=\pm 2$ 
giving that
$$
\tau_\lambda(\rho_0)= 3, \qquad \tau_\mu(\rho_0)= \frac{i \sqrt{3}}{2}.
$$
On the other hand, the trace field of $4_1$ is $\BQ(x)$ where $x^2+3=0$.
This confirms Theorem \ref{thm.11} for the discrete faithful representation
$\rho_0$ of $4_1$. In addition, the cusp-shape of $4_1$ is $\mathfrak{c}=-2i\,\sqrt{3}$,
confirming Equation \eqref{eq.comparecusp}.
\end{example}

\begin{example}
\lbl{ex.2}
We will repeat the previous example for the twist knot $5_2$. 
The non--abelian Reidemeister torsion (with respect to the longitude $\l$) 
for $5_2$ is given by (see~\cite{DHY}):
$$
{\tau}_{\lambda}=(-10 \tr_\mu^2+21) +  \left(5 \tr_\mu^4 -27 \tr_\mu^2 + 35\right)u 
+\left(7 -5\tr_\mu^2\right)u^2,
$$
where $u$ satisfies the polynomial equation
$$
(2\tr_\mu^2 - 7)  - \left(\tr_\mu^4 - 7\tr_\mu^2 +14 \right)u 
+ \left(2\tr_\mu^2 - 7\right) u^2 - u^3 = 0.
$$
Eliminating $u$ from the above equations, it follows that 
$T_{\l}({\tau}_{\lambda},\tr_{\mu})=0$ where 
\begin{eqnarray*}
T_{\l}(x,y)&=&
x^3 
+ x^2 (35 - 26 y^2 + 5 y^4) 
+ x (294 - 280 y^2 + 83 y^4 - 10 y^6)
+ 343 + 196 y^2 - 126 y^4 + 20 y^6  
\end{eqnarray*}
We choose the branch of $u$ such that 
at the discrete faithful representation, $u_0$ satisfies the equation
$$
1 - 2 u_0 + u_0^2 - u_0^3=0, \qquad u_0=0.21508\dots - 1.30714 \dots \, i
$$
which coincides with the Riley polynomial of $5_2$; see \cite{MR}.
The invariant trace field of $5_2$ is the cubic subfield $\BQ(\a)$ of
the complex numbers given by: 
$$
\a^3-\a^2+1=0, \qquad \a=0.877439 \dots - 0.744862 \dots i
$$
and the cusp shape $\mathfrak{c}$ is given by:
$$
\mathfrak{c}=4 \a -6 = -2.49024\dots - 2.97945 \dots i
$$
which is related with the the root of the Riley polynomial by:
$$
u_0=\frac{4}{-\mathfrak{c}-2}
$$
The above equation agrees with \cite[Eqn.(3.9)]{DHY} up to the mirror
image of $5_2$. It follows that 
at the discrete faithful representation $\rho_0$, $\tau_{\l}(\rho_0)$ 
is the root of the equation
$$
\tau_{\l}(\rho_0)^3 + 11 \tau_{\l}(\rho_0)^2  - 138 \tau_{\l}(\rho_0) + 391 =0, 
\qquad \tau_{\l}(\rho_0) = 4.11623\dots  - 1.84036\dots \, i
$$
and in terms of the invariant trace field, is given by:
$$
\tau_{\l}(\rho_0)=-6 \a^2 + 13 \a -6
$$
Equation \eqref{eq.comparecusp} and the above discussion imply that:
$$
\tau_{\mu}(\rho_0)=\frac{\tau_{\l}(\rho_0)}{\mathfrak{c}}=
1 -\frac{3}{2} \a = -0.316158\dots  + 1.11729\dots \, i
$$
Notice that $-2 \tau_{\mu}(\rho_0)=3 \a-2$ is a prime of norm $-23$. In fact,
the invariant trace field $\BQ(\a)$ has discriminant $-23$ and $23$
ramifies as:
$$
-23=(3 \a-2)^2 (3 \a +1)
$$
where $3 \a-2$ and $3 \a +1$ are the primes above $23$.
The above discussion confirms Theorem \ref{thm.11} for the discrete 
faithful representation. 
\end{example}

\subsection{Problems}
\lbl{sub.problems}

In this section we list a few problems and future directions.

\begin{problem}
\lbl{prob.1}
Is the $T_{\l}$-polynomial of a hyperbolic knot nontrivial?
\end{problem}

\begin{remark}
\lbl{rem.puzzle}
The volume and the Reidemeister torsion appear as the {\em classical} and 
{\em semiclassical} limit in a parametrized version of the Volume 
Conjecture; see for example \cite{GM}. Physics arguments suggest that the 
non-commutative $A$-polynomial and the Reidemeister torsion is determined by 
the 
$A$-polynomial and the volume of the manifold alone. However, computations 
with twist knots suggest that the  $A$ and $T_{\l}$-polynomials seem to be 
independent from each other. Perhaps this discrepancy can be explained by
the difference between on-shell and off-shell physics computations.
\end{remark}

Let us now formulate a speculation regarding the 
{\em Parametrized Volume Conjecture} of Gukov-Murakami and Le-Garoufalidis; 
see \cite{GM,GL2}. If $K$ is a knot in $S^3$, let 
$J_{K,N}(q) \in \BQ[q^{\pm 1}]$ denote the quantum group invariant of $K$
colored by the $N$-dimensional irreducible representation of   
$\mathfrak{sl}_2(\CC)$,
and normalized to be $1$ at the unknot. For fixed $\a \in \BC$, the 
Parametrized Volume Conjecture studies the asymptotics of the sequence
$(J_{K,N}(e^{\a/N}))$ for $N=1,2,\dots$. For suitable $\a$ near $2 \pi i$, 
and for hyperbolic knots $K$, one expects an asymptotic expansion of the form
$$
J_{K,N}(e^{\a/N}) \sim e^{\frac{N \CS(\rho_{\a})}{2 \pi i}} N^{3/2}
c_0(\a)\left(1+\sum_{k=1}^\infty \frac{c_k(\a)}{N^k}\right)
$$
where $\rho_{\a} \in X_M$ denotes a representation near $\rho_0$ with 
$\tr_{\mu}(\rho_{\a})=e^{\a}+e^{-\a}$; see \cite{DGLZ,GL2}.

\begin{problem}
\lbl{prob.2}
For every $k$, and with suitable normalization, show that
$c_k(\a)$ are germs of unique elements of the field $\BQ(X_M)$.
\end{problem}

\begin{conjecture}
\lbl{conj.c0}
Show that 
\begin{equation}
\lbl{eq.c0}
c_0(0)=(2 \tau_{\mu}(\rho_0))^{-1/2}
\end{equation}
\end{conjecture}
H. Murakami has proven the above conjecture for the $4_1$ knot (see \cite{Mu}),
and unpublished computations of the second author and D. Zagier
have numerically verified the above conjecture for the $5_2$ and the
$(-2,3,7)$ pretzel knot. The details will appear in forthcoming work.

Our next problem concerns the extension of Theorem \ref{thm.1} to
simple complex Lie groups $G_{\BC}$, rather than $\SL(2,\BC)$.
Physics arguments regarding the 1-loop computation of 
{\em perturbative Chern-Simons theory} suggest 
that an extension of Theorem \ref{thm.1} to arbitrary complex simple groups 
$G_{\BC}$ is possible. It is reasonable to expect that
an extension of the non abelian Reidemeister torsion is possible (see for
example \cite{BH1,BH2}), and that Theorem \ref{thm.1} extends. 

\begin{problem}
\lbl{prob.3}
Extend Theorem \ref{thm.1} to arbitrary simple complex Lie groups $G_{\BC}$.
\end{problem}

\section{The character variety of hyperbolic 3-dimensional manifolds}
\lbl{sec.model}

\subsection{Four favors of the character variety, apr\`es Dunfield}
\lbl{sub.four}

The careful reader may observe that the volume function is defined
for $\PSL(2,\BC)$ representations of a 1-cusped hyperbolic manifold $M$, 
whereas the Reidemeister torsion is defined for $\SL(2,\BC)$-representations 
of $M$.
Our proof of Theorem \ref{thm.1} requires a new variant of a representation,
the so-called {\em augmented representation} that comes in two flavors:
the $\PSL(2,\BC)$ and the $\SL(2,\BC)$ one. For an excellent discussion, 
we refer the reader to \cite[Sec.2-3]{Dn} and \cite[Sec.10]{BDR-V}.
Much of the results of this section the second author 
learnt from N. Dunfield, whom we
thank for his guidance. Naturally, we are responsible for any comprehension
errors.

Let us define the four versions of the character variety of $M$. Let 
$R(M,\SL(2,\BC))$ denote the set of all 
homomorphisms of $\pi_1(M)$ into $\SL(2,\BC)$ 
and let   $X_{M,\SL(2,\BC)}$ be the set of \emph{characters} of $\pi_1(M)$ into $\SL(2, \BC)$ --- which is in a sense the  
algebrico-geometric quotient $R(M,\SL(2,\BC))/\SL(2,\BC)$, where $\SL(2,\BC)$ acts by 
conjugation (see~\cite{Sh}). The \emph{character} $\chi_\rho\colon \pi_1(M) \to \BC$ associated to the representation $\rho$ is defined by $\chi_\rho(g) = \mathrm{tr}(\rho(g))$, for all $g \in \pi_1(M)$. For irreducible representations, two representations are conjugate (in $\SL(2, \BC)$) if, and only if, they have the same character (see~\cite{CCGLS} or~\cite{Sh}). It is easy to see that 
$R(M,\SL(2,\BC))$ and $X_{M,\SL(2,\BC)} $ are affine
varieties defined over $\BQ$. 

Let $\overline{R}(M,\SL(2,\BC))$ denote
the subvariety of $R(M,\SL(2,\BC)) \times P^1(\BC)$ consisting of pairs 
$(\rho,z)$ where $z$ is a fixed point of $\rho(\pi_1(\pt M))$. Let 
$\overline{X}_{M,\SL(2,\BC)}$ denote the algebro-geometric quotient of 
$\overline{R}(M,\SL(2,\BC))$ under the diagonal action of $\SL(2,\BC)$
by conjugation and M\"obius transformations respectively. 
We will call elements $(\rho,z) \in \overline{R}(M,\SL(2,\BC))$ 
{\em augmented representations}. Their images in the augmented character 
variety $\overline{X}(M,\SL(2,\BC))$  will be called 
{\em augmented characters} and will be denoted by square
brackets $[(\rho,z)]$. Likewise, replacing $\SL(2,\BC)$ by
$\PSL(2,\BC)$, we can define the character variety $X_{M,\PSL(2,C)}$ and
its augmented version $\overline{X}_{M,\PSL(2,\BC)}$.

The advantage of the augmented character variety $\overline{X}_{M,\SL(2,\BC)}$
is that given $\ga \in \pi_1(\pt M)$ there is a regular function $e_{\ga}$ 
which sends $[(\rho,z)]$ to the eigenvalue of $\rho(\ga)$ corresponding to $z$.
In contrast, in $X_{M,\SL(2,\BC)}$ only the trace $e_{\ga}+e_{\ga}^{-1}$
of $\rho(\ga)$ is well-defined. Likewise, in 
$\overline{X}_{M,\PSL(2,C)}$ (resp. $X_{M,\SL(2,\BC)}$) only 
$e_{\ga}^2$ (resp. $e_{\ga}^2+e_{\ga}^{-2}$) is defined.

From now on, we will restrict to a geometric component of the $\PSL(2,\BC)$
character variety of $M$ and its lifts. The four character varieties 
associated to $M$ fit in a commutative diagram 
\begin{equation}
\lbl{eq.four}
\xymatrix@-.6pc{ {\overline{X}_{M,\SL(2,\BC)}} \ar[r] \ar[d] & 
{\overline{X}_{M,\PSL(2,\BC)}}\ar[d]  
\\
{X_{M,\SL(2,\BC)}}\ar[r] & {X_{M,\PSL(2,\BC)}}
}
\end{equation}
where the vertical maps are forgetful maps   $[(\rho,z)]\longto [\rho] = \chi_\rho$ and
the horizontal maps are induced by the projection 
$\SL(2,\BC) \longto \PSL(2,\BC)$. 
The vertical maps are generically 2:1 at the geometric components.
The horizontal maps are discussed in \cite[Cor.3.2]{Dn}.

The notation $X_M$ of Section \ref{sec.intro} matches the notation 
$X_M=X_{M,\SL(2,\BC)}$ of this section.

The next lemma describes the coordinates rings of the four versions of
the character variety.

\begin{lemma}
\lbl{lem.ch1}
\begin{enumerate}
  \item   The coordinate ring of $X_{M,\SL(2,\BC)}$ is generated by
$\tr_{g}$ for all $g \in \pi_1(M)$.
  \item   The coordinate ring of $X_{M,\PSL(2,\BC)}$ is generated by
$\tr_{g}^2$ for all $g \in \pi_1(M)$.
  \item   The coordinate ring of $\overline{X}_{M,\SL(2,\BC)}$ is generated by
$\tr_{g}$ for all $g \in \pi_1(M)$ and by $e_{\ga}$ for 
$\ga \in \pi_1(\pt M)$.
\item   The coordinate ring of $\overline{X}_{M,\PSL(2,\BC)}$ is generated by
$\tr_{g}^2$ for all $g \in \pi_1(M)$ and by $e_{\ga}^2$ for 
$\ga \in \pi_1(\pt M)$.
\end{enumerate}
\end{lemma}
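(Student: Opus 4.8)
The plan is to establish part (1) by classical invariant theory and then to bootstrap the remaining three parts from it, passing from $\SL$ to $\PSL$ via the adjoint representation and from a character variety to its augmented version via the eigenvalue function $e_\ga$. I would first record the fundamental $\SL(2,\BC)$ trace identity $\tr_{ab}+\tr_{ab^{-1}}=\tr_a\tr_b$, which is reused throughout. For part (1), since $\pi_1(M)$ is finitely generated, $R(M,\SL(2,\BC))$ is a closed $\SL(2,\BC)$-invariant subvariety of $\SL(2,\BC)^n$ under simultaneous conjugation. By the theorem of Fricke--Vogt--Procesi the ring of conjugation invariants of $\SL(2,\BC)^n$ is generated by the trace functions $\tr_w$ of words $w$ in the generators (indeed of words of length at most three, via the identity above). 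Because $\SL(2,\BC)$ is reductive, the Reynolds operator makes restriction of invariants from the ambient space to the closed subvariety exact and hence surjective, so the coordinate ring of $X_{M,\SL(2,\BC)}=R(M,\SL(2,\BC))/\!/\SL(2,\BC)$ is generated by the $\tr_g$, $g\in\pi_1(M)$.

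For part (2), I would pass to the adjoint representation, which identifies $\PSL(2,\BC)$ with $\mathrm{SO}(3,\BC)$ and gives the standard identity $\tr(\mathrm{Ad}\,\rho(g))=\tr_g^2-1$ for any local $\SL$-lift. Restricting to the geometric component, where representations are irreducible, two $\mathrm{SO}(3,\BC)$-representations that are conjugate in $\GL(3,\BC)$ are already conjugate in $\mathrm{SO}(3,\BC)$: any intertwiner carries the standard symmetric form to another invariant symmetric form, these agree up to scalar by Schur's lemma, and the resulting similitude can be rescaled into $\mathrm{SO}(3,\BC)$ (using $-I\in\mathrm{SO}(3,\BC)$ to correct the determinant). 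Hence Procesi's theorem applies and the coordinate ring of $X_{M,\PSL(2,\BC)}$ is generated by traces of words in the matrices $\mathrm{Ad}\,\rho(g)$; since $\mathrm{Ad}$ is a homomorphism every such word is again $\mathrm{Ad}\,\rho(w)$, and $\tr(\mathrm{Ad}\,\rho(w))=\tr_w^2-1$, so the generators are exactly the functions $\tr_g^2$. Equivalently one may exhibit $X_{M,\SL(2,\BC)}\to X_{M,\PSL(2,\BC)}$ as the quotient by the finite group $\Hom(\pi_1(M),\BZ/2)$ of sign changes $\tr_g\mapsto \e(g)\tr_g$ and compute the invariant ring directly; here the no--$2$--torsion hypothesis guarantees that this sign--change group is $\BZ/2$ and is well understood.

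For parts (3) and (4), I would analyze the forgetful map $\overline{X}_{M,\SL(2,\BC)}\to X_{M,\SL(2,\BC)}$. Over a character whose boundary holonomy is diagonalizable with distinct eigenvalues, the fiber is the two--element set of fixed points $z\in\mathbb{P}^1(\BC)$ of $\rho(\pi_1(\pt M))$, and the eigenvalue function $e_\ga$ is regular and invariant. Since $\pi_1(\pt M)\cong\BZ^2$ fixes the single point $z$, every $e_\ga$ is a monomial in $e_\mu$ and $e_\l$, and $e_\mu$ satisfies the monic quadratic $e_\mu^2-\tr_\mu e_\mu+1=0$ whose coefficients already lie in $\BQ[X_{M,\SL(2,\BC)}]$. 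Thus the extension of coordinate rings is integral of degree two and is generated over $\BQ[X_{M,\SL(2,\BC)}]$ by the single element $e_\mu$; combined with part (1) this gives that $\overline{X}_{M,\SL(2,\BC)}$ has coordinate ring generated by the $\tr_g$ and the $e_\ga$. Part (4) follows by running the same argument for the $\PSL$ version, where only $e_\ga^2$ is single--valued, and combining with part (2).

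I expect the main obstacle to be part (2): the invariant theory for $\PSL(2,\BC)\cong\mathrm{SO}(3,\BC)$ is genuinely more delicate than for $\SL(2,\BC)$, because one conjugates by a smaller group and a priori obtains more invariants. Controlling this forces the restriction to the irreducible (geometric) locus, so that $\mathrm{SO}(3,\BC)$--conjugacy coincides with $\GL(3,\BC)$--conjugacy and Procesi's theorem becomes applicable; it is also here that the hypothesis that $H_1(M)$ has no $2$--torsion enters, ensuring that the $\SL$--to--$\PSL$ covering and its deck group of sign changes behave as expected.
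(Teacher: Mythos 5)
The paper states this lemma without any proof at all --- it is presented as a compilation of standard facts, with the surrounding text pointing to Shalen, \cite{CCGLS}, and (for the augmented varieties) Dunfield \cite[Sec.2-3]{Dn} and the appendix of \cite{BDR-V} --- so your proposal has to stand on its own. Part (1) is correct and is the standard argument (Procesi/Fricke--Vogt plus surjectivity of restriction of invariants to a closed invariant subvariety, via the Reynolds operator). Part (2), however, has a genuine gap. Your reduction ``$\mathrm{SO}(3,\BC)$-conjugacy agrees with $\mathrm{GL}(3,\BC)$-conjugacy on irreducibles, hence Procesi applies'' does not do the required work, for two reasons. First, the restriction mechanism of part (1) is not even available here: $R(M,\PSL(2,\BC))\subset \mathrm{SO}(3,\BC)^n$ is \emph{not} stable under $\mathrm{GL}(3,\BC)$-conjugation, so Procesi's $\mathrm{GL}(3)$-invariants cannot be restricted to it; the ring you must compute is $\BC[R(M,\PSL(2,\BC))]^{\mathrm{SO}(3,\BC)}$. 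Second, a coincidence of orbits is a set-theoretic statement, and it does not imply that the trace functions generate the invariant ring: a bijective morphism of affine varieties need not be surjective on coordinate rings (the normalization of a cuspidal curve is the standard counterexample), so one would additionally need Zariski's main theorem together with normality of the quotient, which you do not address. The correct route is invariant theory for the orthogonal group itself: Procesi's first fundamental theorem for $\mathrm{O}(3)$ says the conjugation invariants of tuples of $3\times 3$ matrices are generated by traces of words in the $A_i$ and $A_i^T$; restriction to the $\mathrm{O}(3,\BC)$-stable closed subvariety $R(M,\PSL(2,\BC))$ is surjective by reductivity; on $\mathrm{SO}(3)$ one has $A^T=A^{-1}$, so these restrict to $\tr(\mathrm{Ad}\,\rho(w))=\tr_w^2-1$; and finally $\mathrm{SO}(3,\BC)$-invariants equal $\mathrm{O}(3,\BC)$-invariants because $\mathrm{O}(3)=\mathrm{SO}(3)\times\{\pm I\}$ and $-I$ acts trivially by conjugation. (Note also that your parenthetical ``$-I\in\mathrm{SO}(3,\BC)$'' is false, since $\det(-I_3)=-1$; the determinant of your intertwiner $P$ is corrected by replacing $P$ with $-P$, which induces the same conjugation.)

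Parts (3)--(4) have a gap of the same ring-versus-set nature. Your argument shows that $e_\mu$ is integral of degree $2$ over $\BQ[X_{M,\SL(2,\BC)}]$ and generates the fraction-field extension, hence that $\BQ[X_{M,\SL(2,\BC)}][e_\mu]\subseteq \BQ[\overline{X}_{M,\SL(2,\BC)}]$ is module-finite with the same fraction field. That does not give equality: the subring generated by the $\tr_g$ and $e_\ga$ could be non-normal, with $\BQ[\overline{X}_{M,\SL(2,\BC)}]$ sitting strictly between it and its integral closure --- exactly the cuspidal-curve phenomenon again. Since $\BQ[\overline{X}_{M,\SL(2,\BC)}]$ is by definition the ring of $\SL(2,\BC)$-invariants of $\BC[\overline{R}(M,\SL(2,\BC))]$, a complete proof has to get its hands on that ring: for instance, show that $\overline{R}(M,\SL(2,\BC))\to R(M,\SL(2,\BC))$ is a \emph{finite} morphism over the relevant component (properness of $R\times\BP^1\to R$ plus quasi-finiteness wherever the boundary holonomy is non-central), exhibit explicit $\SL(2,\BC)$-equivariant module generators of $\BC[\overline{R}]$ over $\BC[R]$ coming from the fixed-point coordinate, and only then pass to invariants. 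Your outline skips this step, and it is precisely the step where the content of parts (3)--(4) lies.
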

The commutative diagram \eqref{eq.four} gives an inclusion of fields
of rational functions:

\begin{equation}
\lbl{eq.fourf}
\xymatrix{
\BQ(\overline{X}_{M,\SL(2,\BC)}) & \ar@{_{(}->}[l] \, \BQ(\overline{X}_{M,\PSL(2,\BC)})
\\
\BQ(X_{M,\SL(2,\BC)}) \ar@{^{(}->}[u] & \ar@{_{(}->}[l] \, \BQ(X_{M,\PSL(2,\BC)}) 
\ar@{^{(}->}[u]
}
\end{equation}
where the vertical field extensions are of degree $2$.

\subsection{The coefficient field of augmented representations}
\lbl{sub.cfield}

A crucial part in our proof of Theorem \ref{thm.1} is the choice of a 
coefficient field of an $\SL(2,\BC)$-representation of $\pi_1(M)$. In this 
section,
we show that the notion of an augmented representation   fits well
with the choice of a coefficient field.

First, let us describe the problem. Given a subgroup $\Ga$ of $\SL(2,\BC)$,
we can define its {\em trace field} $\BQ(\Ga)$ (resp. its 
{\em coefficient field} $E(\Ga)$) by 
$\BQ(\tr(A)\, | \, A \in \Ga)$ (resp. the field 
generated over $\BQ$ by the entries of all elements $A$ of $\Ga$).
The trace field but {\em not} the coefficient field of $\Ga$ is obviously 
invariant under conjugation of $\Ga$
in $\SL(2,\BC)$. In general, it is not possible to choose
a conjugate of $\Ga$ to be a subgroup of $\SL(2,\BQ(\Ga))$. 
The following lemma shows that this is possible after passing to at most
quadratic extension of the trace field.

\begin{lemma}
\lbl{lem.ch2}(\cite[Prop.~3.3]{Ma}\cite[Cor.~3.2.4]{MR})
If $\Ga$ is non-elementary, then $\Ga$ is conjugate to $\SL(2,K)$ where
$K=\BQ(\Ga)(e)$ is an extension of degree $[K:\BQ(\Ga)]\leq 2$, and
$e$ can be chosen to be an eigenvalue of a loxodromic element of $\Ga$.
\end{lemma}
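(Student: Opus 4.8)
The plan is to prove Lemma \ref{lem.ch2} by a concrete normalization argument, locating the coefficient field through explicit choices of conjugating matrix. First I would use that $\Ga$ is non-elementary to find two elements with a useful geometric configuration. Specifically, since $\Ga$ is non-elementary, it contains a loxodromic element $A$; let $e$ be one of its eigenvalues, so that $A$ is diagonalizable with distinct eigenvalues $e, e^{-1}$. I would conjugate $\Ga$ by the matrix diagonalizing $A$, so that after this (so-far unrestricted) conjugation we may assume $A = \mathrm{diag}(e, e^{-1})$. The key observation is that $e + e^{-1} = \tr(A) \in \BQ(\Ga)$, so $e$ satisfies the quadratic $t^2 - \tr(A) t + 1 = 0$ over $\BQ(\Ga)$; hence $K = \BQ(\Ga)(e)$ is an extension of degree at most $2$, and $e^{-1} = \tr(A) - e \in K$ as well. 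This already identifies the field $K$ named in the statement.

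The next step is to pin down the remaining conjugation freedom (the diagonal torus centralizing $A$) so as to force all entries into $K$. After fixing the diagonal form of $A$, the residual conjugation is by $\mathrm{diag}(t, t^{-1})$, which scales the off-diagonal entry of any matrix $B = \left(\begin{smallmatrix} b_{11} & b_{12} \\ b_{21} & b_{22} \end{smallmatrix}\right)$ by $t^2$ and $t^{-2}$. Since $\Ga$ is non-elementary it contains some $B$ not sharing a fixed point with $A$, so $b_{12} b_{21} \neq 0$; I would conjugate by a diagonal matrix to arrange $b_{12} = b_{21}$ (equivalently, $b_{12}b_{21}$ becomes the common squared entry). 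The standard trace-field computation then shows the entries of $B$ lie in $K$: the diagonal entries $b_{11}, b_{22}$ are recovered from $\tr(B)$ and $\tr(AB)$ (using that the eigenvalues of $A$ lie in $K$), while $b_{12}b_{21}$ is recovered from $\det B = 1$ together with these, and the normalization $b_{12}=b_{21}$ then places each off-diagonal entry in $K$ (possibly after checking that the relevant square root is already present, which is where the quadratic extension is genuinely used).

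Having normalized both $A$ and one transverse element $B$ to have entries in $K = \SL(2,K)$, the final step is to propagate this to an arbitrary $C \in \Ga$. For any such $C$, the traces $\tr(C)$, $\tr(AC)$, $\tr(BC)$ all lie in $\BQ(\Ga) \subset K$, and one shows that these trace values, together with the now-fixed entries of $A$ and $B$, determine the four entries of $C$ as elements of $K$ by solving a linear system whose coefficients come from $A$ and $B$. Because $A$ and $B$ generate an irreducible (indeed non-elementary) pair, this system is nondegenerate, so $C \in \SL(2,K)$. This exhibits the desired conjugate of $\Ga$ inside $\SL(2,K)$ with $e$ an eigenvalue of a loxodromic element, as claimed.

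I expect the main obstacle to be the second step: verifying that the off-diagonal entries, and not merely their product $b_{12}b_{21}$, can be brought into $K$ without enlarging the field beyond a quadratic extension. The subtlety is that $b_{12}b_{21} \in \BQ(\Ga)$ is forced, but individually $b_{12}$ and $b_{21}$ a priori require adjoining $\sqrt{b_{12}b_{21}}$; the content of the lemma is that this square root is already available in $K = \BQ(\Ga)(e)$, which must be checked via a careful trace identity relating $b_{12}b_{21}$ to the loxodromic eigenvalue $e$ (for instance through $\tr(ABA^{-1}B^{-1})$ or an analogous commutator expression). Rather than grind through this, I would invoke the cited results of Maclachlan and Maclachlan–Reid, where precisely this quaternion-algebra/quadratic-extension bookkeeping is carried out; the references \cite{Ma} and \cite{MR} supply exactly the needed normalization.
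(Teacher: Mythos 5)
Your overall strategy (diagonalize a loxodromic $A$, use the residual diagonal conjugation to normalize one transverse element $B$, then propagate to all of $\Ga$ via trace identities) is the standard elementary route, and it is essentially what underlies the cited \cite[Cor.~3.2.4]{MR}; note that the paper itself gives no proof of this lemma beyond the citation and the remark that quaternion algebras are involved. However, your second step contains a genuine error, and it is not the repairable technicality you suggest. The symmetric normalization $b_{12}=b_{21}$ forces the off-diagonal entries of the conjugated $B$ to be $\pm\sqrt{b_{12}b_{21}}$, and this square root need \emph{not} lie in $K=\BQ(\Ga)(e)$: no trace identity can put it there. Concretely, take $\Ga=\langle A,B\rangle$ with $A=\mat{2}{0}{0}{1/2}$ and $B=\mat{1}{2}{1}{3}$. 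Both generators are loxodromic with disjoint fixed-point sets, so $\Ga$ is non-elementary; all entries are rational, so $\BQ(\Ga)=\BQ$, and taking $e=2$ gives $K=\BQ$ --- indeed $\Ga\subset\SL(2,\BQ)$ already. Yet your normalization replaces $B$ by $\mat{1}{\pm\sqrt{2}}{\pm\sqrt{2}}{3}$, whose entries do not lie in $K$. So your claim that ``the content of the lemma is that this square root is already available in $K$'' is false: the lemma asserts only that \emph{some} conjugate of $\Ga$ lies in $\SL(2,K)$, and the conjugate your normalization produces is the wrong one.

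The fix is simply to use the asymmetric normalization: scale by the diagonal torus so that $b_{12}=1$. Then $b_{11},b_{22}\in K$ follow from the nondegenerate linear system $b_{11}+b_{22}=\tr(B)$, $e\,b_{11}+e^{-1}b_{22}=\tr(AB)$ (nondegenerate since $e\neq e^{-1}$), and $b_{21}=b_{11}b_{22}-1\in K$ from $\det B=1$; no square root ever appears. Your propagation step then works, with one small correction: for a general $C\in\Ga$ you need four traces, $\tr(C)$, $\tr(AC)$, $\tr(BC)$ and $\tr(ABC)$, not three. The first two determine the diagonal entries $c_{11},c_{22}$, and then $\tr(BC)$, $\tr(ABC)$ give a linear system with determinant $e^{-1}-e\neq 0$ for the two quantities $b_{11}c_{11}+b_{12}c_{21}$ and $b_{21}c_{12}+b_{22}c_{22}$, from which $c_{12},c_{21}\in K$ since $b_{12},b_{21}\neq 0$; using only three traces plus $\det C=1$ would leave a quadratic ambiguity. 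With these two changes your argument becomes complete and self-contained, and does not need to fall back on the quaternion-algebra machinery of \cite{Ma} and \cite{MR}.
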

For the definition of a {\em non-elementary} subgroup of $\SL(2,\BC)$
and of a {\em loxodromic} element, see \cite{Ma,MR}. The proof of   Lemma~\ref{lem.ch2}
uses the theory of 4-dimensional {\em quaternion algebras}.

We want to apply Lemma \ref{lem.ch2} to a representation 
$\rho \in R({M,\SL(2,\BC)})$. Recall that the discrete faithful 
representation $\rho_0$
of $\pi_1(M)$ is non-elementary, and that the subset of characters of 
elementary 
representations in the geometric component $X_{M,\SL(2,C)}$ is Zariski closed,
and therefore, finite; see \cite{MR}.

Given a representation $\rho \in R({M,\SL(2,\BC)})$, let 
$\BQ(\rho)$ and $E(\rho)$ denote the {\em trace field} and the 
{\em coefficient field}
of the subgroup $\rho(\pi_1(M)) \subset \SL(2,\BC)$ respectively. Likewise, if 
$(\rho,z) \in 
\overline{R}({M,\SL(2,\BC)})$ is an augmented representation, let 
$\BQ(\rho,z)$ denote the field generated over $\BQ$ by $\tr_g(\rho)$ for 
$g 
\in \pi_1(M)$ and $e_{\ga}$ for $\ga \in \pi_1(\pt M)$. Similarly, we define 
the coefficient field $E(\rho, z)$ associated to the augmented 
representation $(\rho, z)$.

The next lemma follows from Lemma \ref{lem.ch2} and the above discussion.

\begin{lemma}
\lbl{lem.ch3}
\begin{enumerate}
  \item If $\rho \in R({M,\SL(2,\BC)})$ is generic (i.e., non-elementary)
then a conjugate of $\rho$ is defined over a quadratic extension of 
$\BQ(\rho)$. 
  \item If $(\rho,z) \in \overline{R}({M,\SL(2,\BC)})$ is generic 
(i.e., non-elementary) then there exists $N \in \SL(2,\BC)$ so that
$N^{-1}\rho N$ is defined over $E(\rho,z)$.
  \item There exists $N \in \SL(2,\BC)$ such that if $(\rho,z)$ is near the 
discrete faithful representation $(\rho_0,z_0)$, then $N^{-1}(\rho,z) N$ is
defined over $E(\rho,z)$.
\end{enumerate}
\end{lemma}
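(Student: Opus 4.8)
The three statements all reduce to Lemma~\ref{lem.ch2}; what varies is which eigenvalue one feeds into that lemma and, for part~(3), whether the reduction can be made uniform across a neighborhood.

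Part~(1) is the direct application. A generic $\rho$ has non-elementary image $\Ga=\rho(\pi_1(M))$, so Lemma~\ref{lem.ch2} produces $N\in\SL(2,\BC)$ with $N^{-1}\Ga N\subset\SL(2,K)$, where $K=\BQ(\Ga)(e)=\BQ(\rho)(e)$ and $[K:\BQ(\rho)]\le 2$; this is precisely the asserted quadratic extension. For part~(2) the plan is to spend the one degree of freedom in Lemma~\ref{lem.ch2} --- the choice of a loxodromic element and its eigenvalue $e$ --- on the data already recorded by the augmentation. When the peripheral subgroup $\rho(\pi_1(\pt M))$ contains a loxodromic element $\ga$, the marked fixed point $z$ selects one of its eigenvalues, $e_{\ga}$; applying Lemma~\ref{lem.ch2} to this $\ga$ with $e=e_{\ga}$ produces a conjugate $N^{-1}\rho N\subset\SL(2,\BQ(\rho)(e_{\ga}))$. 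One then checks the inclusion $\BQ(\rho)(e_{\ga})\subseteq E(\rho,z)$: the trace field $\BQ(\rho)$ sits inside any coefficient field, and $e_{\ga}$ is a rational function of the entries of $\rho(\ga)$ and the coordinate of $z$, hence lies in $E(\rho,z)$. This gives~(2).

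Part~(3) is the substantive point, and I expect it to be the main obstacle. Two difficulties must be addressed at once: the conjugation should not depend on the particular $(\rho,z)$, and it must continue to work at $\rho_0$ itself, where the peripheral subgroup is parabolic rather than loxodromic, so that the eigenvalue argument of~(2) degenerates. The plan is to use the normal form that places the marked fixed point $z$ at $\infty$, which makes the peripheral subgroup upper triangular with $e_{\ga}$ as its $(1,1)$-entry; crucially this normalization is insensitive to whether the peripheral elements are parabolic or loxodromic, so it extends regularly across $\rho_0$. Fixing once and for all a peripheral $\ga$ together with an auxiliary $\delta\in\pi_1(M)$ for which $\rho_0(\ga),\rho_0(\delta)$ have no common fixed point --- an open condition, hence valid on a neighborhood $U$ of $(\rho_0,z_0)$ --- I would use the residual upper-triangular freedom to normalize $\rho(\delta)$ and read off the matrix entries as the classical trace-coordinate expressions, which are rational in the $\tr_g$ and in $e_{\ga}$ and therefore lie in $E(\rho,z)$.

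The delicate step is to verify that a single conjugator adapted to $(\rho_0,z_0)$ serves all of $U$ without enlarging the field, rather than a representation-dependent one. Here I would exploit the smoothness of $X_M$ at $\rho_0$ together with the analyticity of the family to control the variation of the normalizing data and to confirm that they stay defined over the augmented field $\BQ(\rho,z)\subseteq E(\rho,z)$ throughout $U$. Parts~(1) and~(2), by contrast, are little more than bookkeeping on top of Lemma~\ref{lem.ch2}, whereas~(3) is where the geometric input --- the regular extension of the fixed-point-at-$\infty$ normalization across the parabolic locus --- actually enters.
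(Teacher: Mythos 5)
For parts (1) and (2) you are essentially on the paper's own route: the paper's entire proof is the single sentence that the lemma ``follows from Lemma \ref{lem.ch2} and the above discussion'', and that discussion is exactly your bookkeeping --- apply Lemma \ref{lem.ch2} directly for (1), and for (2) spend the choice of loxodromic element on a peripheral curve $\ga$ so that the eigenvalue selected by the augmentation lies in $E(\rho,z)$. However, your (2) is proved only under the hypothesis that the peripheral subgroup contains a loxodromic element. Since ``generic'' is defined in the statement as non-elementary, the statement also covers non-elementary augmented representations with purely parabolic peripheral holonomy --- $(\rho_0,z_0)$ itself is one --- and there $e_\ga=\pm 1$, while Lemma \ref{lem.ch2} only yields an eigenvalue of some loxodromic element elsewhere in $\rho(\pi_1(M))$, which need not lie in $E(\rho,z)$. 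That case needs a separate input: a non-elementary subgroup of $\SL(2,\BC)$ containing a parabolic element has split quaternion algebra (for instance $\rho(\ga)-\mathbf{1}$ is a nonzero element of trace and determinant zero), hence is conjugate into $\SL(2,\BQ(\rho))$; see \cite[Chap.~3]{MR}. Your proposal never invokes this.

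The genuine failure is in (3). Your normalization (send $z$ to $\infty$, then normalize $\rho(\delta)$) is a conjugation that depends on $(\rho,z)$, whereas the statement requires one $N$ for the whole neighborhood, and no appeal to smoothness of $X_M$ or analyticity can bridge this. Concretely, on the only reading under which (3) is not a tautology (namely $E(\rho,z)$ generated by traces and peripheral eigenvalues, hence conjugation-invariant and countable), the following happens: suppose $N^{-1}\rho N$ is irreducible and defined over $E=E(\rho,z)$, and let $A$ be close to $\mathbf{1}$ with entries transcendental over $E$; then $(A\rho A^{-1},Az)$ still lies in the neighborhood and has the same field $E$, but $N^{-1}A\rho A^{-1}N$ cannot be defined over $E$, since by Schur's lemma any matrix conjugating one $E$-defined irreducible representation to another with the same character is a scalar multiple of a matrix over $E$, which $N^{-1}AN$ is not. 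So no fixed $N$ can serve an entire open subset of $\overline{R}(M,\SL(2,\BC))$; part (3) only makes sense, and is only used, for augmented representations running through a distinguished algebraic family through $(\rho_0,z_0)$. That is exactly what the paper supplies and your proposal lacks: the face-pairing representation attached to a solution of the gluing equations has entries that are tautologically rational functions of the shape parameters and of $e_\l,e_\m$ (Lemmas \ref{lem.ch5} and \ref{lem.ch6}), so the whole family is defined over the relevant field at once, and $N$ is the single matrix matching the given lift with that canonical family.
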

An alternative version of the above Lemma is possible; see Lemma \ref{lem.ch6}
below.

\subsection{Augmented representations and the shape field}
\lbl{sub.shapefield}

There is an alternative description of the field 
$\BQ(\overline{X}_{M,\PSL(2,\BC)})$ in terms of shape parameters of ideal
triangulations of $M$, which is useful in applications. For completeness, 
we discuss it in this section and the next. 
Let us first describe $\overline{X}_{M,\PSL(2,\BC)}$ in terms 
of {\em pseudo-developing maps}, discussed in detail in \cite[Sec.2.5]{Dn}.
Given $\rho \in R_{M,\PSL(2,\BC)}$, consider a 
$\rho$-equivariant map $\widetilde{M} \longto \BH^3$, where $\BH^3$ 
denotes the 3-dimensional hyperbolic space. Since $\pt M$ is a 2-torus, 
it lifts to a disjoint collection of planes $\BR^2$ in the universal cover 
$\widetilde{M}$.
Let $\overline{M}$ denote the space obtained by cutting $\widetilde{M}$ along 
these
planes, and crushing them into points. Set-theoretically, 
the set $\overline{M}\setminus \widetilde{M}$ of ideal points is in 1-1 
correspondence with the {\em cusps} of $M$ in $\BH^3$, i.e., with the coset 
$\pi_1(M)/\pi_1(\pt M)$. An augmented representation $(\rho,z) \in 
\overline{R}_{M,\PSL(2,\BC)}$ gives a $\pi_1(M)$-equivariant map
$$
D_{(\rho,z)}: \overline{M} \longto \overline{\BH}^3
$$
where $\overline{\BH}^3=\BH^3 \cup \BC\BP^1$ is the compactification of
hyperbolic space by adding a sphere $\BC\BP^1$ at infinity.
Such a map is a pseudo-developing map in \cite[Sec.2.5]{Dn}. An augmented
character $[(\rho,z)] \in \overline{X}_{M,\PSL(2,\BC)}$ does not have a 
unique pseudo-developing map, however every two are homotopic, for example
using a straight line homotopy $t f(x) +(1-t)g(x)$ in $\BH^3$.
Thus, there is a well-defined map:

\begin{equation}
\lbl{eq.pseudo}
\overline{X}_{M,\PSL(2,\BC)}\longto \{\text{Pseudo-developing maps of M,
modulo homotopy rel boundary}\}
\end{equation}
Consider a 4-tuple of distinct points 
$(A,B,C,D) \in (\overline{M}\setminus \widetilde{M})^4$,
and an augmented character $[(\rho,z)] \in \overline{X}_{M,\PSL(2,\BC)}$.
Then, $D_{[(\rho,z)]}$ sends $A,B,C,D$ to four points $A',B',C',D'$
in $\BC\cup\{\infty\}=\BC\BP^1=\pt \BH^3$, and consider their {\em cross-ratio}
$$
cr_{A,B,C,D}[(\rho,z)]=\frac{(A'-D')(B'-C')}{(A'-C')(B'-D')}.
$$
If $A',B',C',D'$ are distinct, then $cr_{A,B,C,D}[(\rho,z)] \in \BC$, else
$cr_{A,B,C,D}[(\rho,z)]$ is undefined. This gives a rational map
$$
cr_{A,B,C,D}: \overline{X}_{M,\PSL(2,\BC)} \longto \BC.
$$

Let $\BQ^{\mathrm{dev}}_M$ denote the field over $\BQ$ generated by 
$cr_{A,B,C,D}$ for all 4-tuples of distinct points of 
$\overline{M}\setminus \widetilde{M}$. 

\begin{lemma}
\lbl{lem.ch4}
We have 
$$
\BQ^{\mathrm{dev}}_M=\BQ(\overline{X}_{M,\PSL(2,\BC)}).
$$
\end{lemma}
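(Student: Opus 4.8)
The plan is to prove the two inclusions $\BQ^{\mathrm{dev}}_M \subseteq \BQ(\overline{X}_{M,\PSL(2,\BC)})$ and $\BQ(\overline{X}_{M,\PSL(2,\BC)}) \subseteq \BQ^{\mathrm{dev}}_M$ separately. The first inclusion is essentially definitional: each cross-ratio function $cr_{A,B,C,D}$ was constructed above as a rational map $\overline{X}_{M,\PSL(2,\BC)} \longto \BC$, and since the pseudo-developing map is well-defined up to homotopy rel boundary, the value of $cr_{A,B,C,D}[(\rho,z)]$ depends only on the augmented character. Cross-ratios of four points on $\BC\BP^1$ are invariant under the diagonal M\"obius action of $\PSL(2,\BC)$, so these functions descend to genuine rational functions on the quotient variety, and hence the field they generate sits inside $\BQ(\overline{X}_{M,\PSL(2,\BC)})$.

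The real content is the reverse inclusion, and the natural strategy is to show that the cross-ratio functions already separate enough structure to recover the generators of the coordinate ring listed in Lemma \ref{lem.ch1}(4), namely $\tr_g^2$ for $g \in \pi_1(M)$ and $e_\ga^2$ for $\ga \in \pi_1(\pt M)$. The key idea is that for a loxodromic $\rho(g)$ one can express its squared trace (equivalently the multiplier $e_\ga^2$) in terms of the cross-ratio of four ideal points that are permuted in a controlled way by the action of $g$. Concretely, I would use that the trace-square of a M\"obius transformation is a rational function of the cross-ratio of a point, its image, and the two fixed points; by choosing the four ideal points $A,B,C,D$ among $\pi_1(M)/\pi_1(\pt M)$ to be translates $x, gx$ together with points approximating the fixed points of $\rho(g)$, the cross-ratio $cr_{A,B,C,D}$ becomes the desired trace or multiplier expression. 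Taking finitely many such tuples and forming rational combinations recovers all the generators.

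The main obstacle I expect is the one hiding behind the phrase ``four points approximating the fixed points'': there is no guarantee that a single fixed point of $\rho(g)$ on $\BC\BP^1$ is itself the image of an ideal point of $\overline{M}$, so one cannot literally feed the fixed points into a cross-ratio of ideal points. The way around this is to work with a generic four-tuple of ideal points and exploit the $\pi_1(M)$-equivariance of $D_{[(\rho,z)]}$: the translates $\{g^n A\}$ accumulate to a fixed point of $\rho(g)$, and limits or algebraic manipulations of the cross-ratio functions $cr_{A, g A, B, g B}$ as $A, B$ vary over the (dense, infinite) set of ideal points yield the multiplier $e_\ga^2$ as an element of $\BQ^{\mathrm{dev}}_M$. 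A careful treatment must check that these manipulations stay inside the field rather than merely an analytic closure, and must handle the peripheral generators $e_\ga^2$ for $\ga \in \pi_1(\pt M)$, where the fixed point $z$ is exactly the designated ideal point, making those cases cleaner than the generic word $g$.

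Once both inclusions are established, equality of fields follows, and I would remark that the augmented $\SL(2,\BC)$ version can be recovered from Lemma \ref{lem.ch1} and the degree-$2$ field extensions of diagram \eqref{eq.fourf}.
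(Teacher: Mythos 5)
Your first inclusion ($\BQ^{\mathrm{dev}}_M \subseteq \BQ(\overline{X}_{M,\PSL(2,\BC)})$) is fine and is indeed essentially definitional. The problem is in the reverse inclusion, and it is exactly the issue you flag yourself but do not resolve: your mechanism for producing $\tr_g^2$ and $e_\gamma^2$ out of cross-ratios is to let ideal points $g^nA$ accumulate at the fixed points of $\rho(g)$ and pass to a limit. But $\BQ^{\mathrm{dev}}_M$ is the field generated by finitely many-at-a-time rational combinations of the cross-ratio functions; it is a finitely generated extension of $\BQ$ and is in no sense closed under pointwise limits of sequences of its elements. A limit of infinitely many distinct rational functions on $\overline{X}_{M,\PSL(2,\BC)}$ need not be a rational function at all, and even if it happened to be, that is precisely what would need proof. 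So the ``way around'' the obstacle is not an argument, and the hard direction remains open in your proposal.

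The missing idea --- which is what the paper uses, by invoking verbatim the proof of Lemma 5.5.2 of Maclachlan--Reid \cite{MR} --- is a normalization trick that avoids fixed points of non-peripheral elements entirely. Fix three ideal points $A,B,C$ of $\overline{M}\setminus\widetilde{M}$ and conjugate the (tautological) augmented representation, over the function field, so that their images are $0,1,\infty$. Then for \emph{every} other ideal point $D$, its coordinate in $\BC\BP^1$ is literally a cross-ratio $cr_{A,B,C,D}$, hence lies in $\BQ^{\mathrm{dev}}_M$. Now each $\rho(g)$ permutes the ideal points, and a M\"obius transformation is determined by its values at three points: $\rho(g)$ is the unique M\"obius map sending $0,1,\infty$ to the coordinates of $gA,gB,gC$, which are in $\BQ^{\mathrm{dev}}_M$. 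Solving for it gives projective matrix entries in $\BQ^{\mathrm{dev}}_M$, whence $\tr_g^2=(\tr \rho(g))^2/\det\rho(g)\in\BQ^{\mathrm{dev}}_M$ for all $g\in\pi_1(M)$. For peripheral $\gamma$ the augmentation fixed point $z$ is itself (the image of) an ideal point, so with $\rho(\gamma)=\left(\begin{smallmatrix}a&b\\ c&d\end{smallmatrix}\right)$ one gets $e_\gamma^2=(cz+d)^2/\det\rho(\gamma)\in\BQ^{\mathrm{dev}}_M$. By Lemma \ref{lem.ch1}(4) these generate $\BQ(\overline{X}_{M,\PSL(2,\BC)})$, completing the inclusion. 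Your instinct to target the generators of Lemma \ref{lem.ch1}(4) was right; what you lacked was this finite, purely algebraic recovery of group elements from their action on three ideal points, in place of an analytic limit.
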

The proof will be given in the next section.

\subsection{Ideal triangulations and the gluing equations variety}
\lbl{sub.gluing}

A convenient way to construct the unique hyperbolic structure on $M$, and
its small incomplete hyperbolic deformations 
is using an {\em ideal triangulation} $\calT=(\calT_1,\dots,\calT_s)$ of
$M$ which recovers the complete hyperbolic structure. For a detailed
description of ideal triangulations, see \cite{BP} and also 
\cite[App.10]{BDR-V}. An ideal triangulation $\calT$ which is compatible with
the discrete faithful representation has 
nondegenerate shape parameters $z_j \in \BC\setminus\{0,1\}$ for $j=1,\dots,s$.
Such a triangulation always exists; for example subdivide the canonical 
Epstein-Penner decomposition of $M$ by adding ideal triangles; 
see \cite{EP, BP,PP}.
Once we choose shape parameters for each ideal tetrahedron, one can use
them to give a hyperbolic metric (in general incomplete) in the universal
cover $\widetilde{M}$, once a compatibility condition along the edges
of $\calT$ is satisfied. This compatibility condition defines the so-called
  {\em Gluing Equations variety} $\calG(\calT)$. In the appendix of \cite{BDR-V},
Dunfield describes a map

\begin{equation}
\lbl{eq.Gg}
\calG(\calT) \longto \overline{R}_{M,\PSL(2,\BC)}
\end{equation}
which projects to an injection 

\begin{equation}
\lbl{eq.Ggg}
\calG(\calT) \longto \overline{X}_{M,\PSL(2,\BC)}
\end{equation}
Consider the field $\BQ(z_1,\dots,z_s)$ over $\BQ$ generated by the shape 
parameters 
$z_1,\dots,z_s$.
A priori, $\BQ(z_1,\dots,z_r)$ depends on $M$. The next lemma describes the
fields of rational functions of augmented representations in terms of
the shape field.

\begin{lemma}
\lbl{lem.ch5}
\rm{(a)} We have
\begin{equation}
\lbl{eq.ch5a}
\BQ(\overline{X}_{M,\PSL(2,\BC)})=\BQ(z_1,\dots,z_s)
\end{equation}
and
\begin{equation}
\lbl{eq.ch5b}
\BQ(\overline{X}_{M,\SL(2,\BC)})=\BQ(z_1,\dots,z_s,e_{\l},e_{\mu})
\end{equation}
\rm{(b)} If the image of $(z_1,\dots,z_s) \in \calG(\calT)$ is
$[(\rho,z)] \in \overline{R}_{M,\PSL(2,\BC)}$ under the map \eqref{eq.Gg}, then
the trace field (resp. coefficient field)
of an $\SL(2,\BC)$ lift of $[(\rho,z)]$ is $\BQ(z_1,\dots,z_s)$ 
(resp. $\BQ(z_1,\dots,z_s,e_{\l},e_{\mu})$).
\end{lemma}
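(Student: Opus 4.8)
The plan is to prove (a) first, deduce (b) as a corollary, and use Lemma \ref{lem.ch4} as the main bridge between the cross-ratio description and the shape parameters. The key observation is that shape parameters are themselves cross-ratios: the shape parameter $z_j$ of the $j$-th ideal tetrahedron $\calT_j$ is precisely the cross-ratio $cr_{A,B,C,D}$ of its four ideal vertices, evaluated under the pseudo-developing map $D_{[(\rho,z)]}$. This is the standard fact that the dihedral-angle/shape parameter of an ideal tetrahedron in $\BH^3$ equals the cross-ratio of its vertices on $\partial\BH^3 = \BC\BP^1$. Consequently each $z_j$ lies in the field $\BQ^{\mathrm{dev}}_M$, which by Lemma \ref{lem.ch4} equals $\BQ(\overline{X}_{M,\PSL(2,\BC)})$. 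This gives the inclusion $\BQ(z_1,\dots,z_s) \subseteq \BQ(\overline{X}_{M,\PSL(2,\BC)})$.

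For the reverse inclusion, I would argue that the injection \eqref{eq.Ggg} of $\calG(\calT)$ into $\overline{X}_{M,\PSL(2,\BC)}$ is a birational isomorphism onto the geometric component, so that it induces an isomorphism of function fields. The point is that the shape parameters $(z_1,\dots,z_s)$ are rational coordinates on $\calG(\calT)$, and the map \eqref{eq.Gg} of Dunfield reconstructs the augmented $\PSL(2,\BC)$-representation (hence its cross-ratios, traces, and peripheral eigenvalue-squares) as rational functions of the $z_j$: the holonomies of an ideal triangulation are built from the shape parameters by explicit matrix products (the standard edge/face gluing matrices), which are rational in the $z_j$. Thus every generator $cr_{A,B,C,D}$ of $\BQ^{\mathrm{dev}}_M=\BQ(\overline{X}_{M,\PSL(2,\BC)})$ is a rational function of the shapes, giving $\BQ(\overline{X}_{M,\PSL(2,\BC)}) \subseteq \BQ(z_1,\dots,z_s)$ and hence \eqref{eq.ch5a}.

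For \eqref{eq.ch5b}, recall from the field diagram \eqref{eq.fourf} and the coordinate-ring description in Lemma \ref{lem.ch1} that passing from $\overline{X}_{M,\PSL(2,\BC)}$ to $\overline{X}_{M,\SL(2,\BC)}$ amounts to adjoining the peripheral eigenvalues $e_{\ga}$ for $\ga \in \pi_1(\partial M)$, i.e. taking square roots of the $e_{\ga}^2$ already present. Since $\pi_1(\partial M)\cong\BZ^2$ is generated by the meridian $\mu$ and longitude $\l$, and since $e_{\ga}$ for a general peripheral $\ga$ is a monomial in $e_\mu, e_\l$, it suffices to adjoin $e_\mu$ and $e_\l$. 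Combining with \eqref{eq.ch5a} yields $\BQ(\overline{X}_{M,\SL(2,\BC)}) = \BQ(z_1,\dots,z_s,e_\l,e_\mu)$.

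Finally, part (b) is a pointwise restatement of (a) read through the map \eqref{eq.Gg}: if the shape point $(z_1,\dots,z_s)$ maps to $[(\rho,z)]$, then the trace field of an $\SL(2,\BC)$-lift is generated by the values $\tr_g(\rho)$, which by the reconstruction above are rational in the $z_j$, so it equals $\BQ(z_1,\dots,z_s)$; adjoining the peripheral eigenvalues $e_\l, e_\mu$ gives the coefficient field $\BQ(z_1,\dots,z_s,e_\l,e_\mu)$, matching Lemma \ref{lem.ch2} and Lemma \ref{lem.ch3}. The main obstacle I expect is verifying that \eqref{eq.Ggg} is genuinely birational onto the geometric component — i.e. that the shape parameters form a rational coordinate system with no loss of information and that the holonomy reconstruction is rational rather than merely algebraic; establishing this cleanly is where the care is needed, and I would lean on Dunfield's construction in the appendix of \cite{BDR-V} together with the nondegeneracy $z_j \in \BC\setminus\{0,1\}$ guaranteed near the discrete faithful representation.
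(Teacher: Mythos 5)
Your argument for \eqref{eq.ch5a} is essentially correct and close in substance to the paper's: the paper notes that the $z_j$ are coordinate functions on $\calG(\calT)$ and that \eqref{eq.Ggg} is an injection of one affine curve into another, hence birational onto the geometric component, so the function fields agree; your variant (the inclusion $\BQ(z_1,\dots,z_s)\subseteq\BQ(\overline{X}_{M,\PSL(2,\BC)})$ via Lemma \ref{lem.ch4}, since each $z_j$ is the cross-ratio of the four ideal vertices of $\calT_j$, plus the reverse inclusion from rationality of the developing construction) is a legitimate alternative. The first genuine gap is in \eqref{eq.ch5b}. You ``recall'' from Lemma \ref{lem.ch1} and \eqref{eq.fourf} that passing from $\overline{X}_{M,\PSL(2,\BC)}$ to $\overline{X}_{M,\SL(2,\BC)}$ amounts to adjoining the peripheral eigenvalues; that is not what Lemma \ref{lem.ch1} says. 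The coordinate ring of $\overline{X}_{M,\SL(2,\BC)}$ is generated by the $e_{\ga}$ \emph{together with all trace functions} $\tr_g$, $g\in\pi_1(M)$, of which only the squares $\tr_g^2$ are available on the $\PSL(2,\BC)$ side. The entire content of \eqref{eq.ch5b} is that the single quadratic extension $\BQ(z_1,\dots,z_s,e_{\l},e_{\mu})$ already contains every $\tr_g$ --- i.e.\ that one does not need a separate square root for each group element. This is exactly what the paper extracts from Lemma \ref{lem.ch3} (equivalently Lemma \ref{lem.ch6}(2), resting on the Maclachlan--Reid quaternion-algebra result \cite[Cor.~3.2.4]{MR}): the lifted representation can be conjugated so that its matrix entries lie in $\BQ(z_1,\dots,z_s,e_{\l},e_{\mu})$, whence so do all its traces. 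You supply no argument for this point.

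Part (b) has a more serious gap: it concerns the fields generated by the \emph{values} of these functions at a given point of $\calG(\calT)$, and it is not a formal ``pointwise restatement'' of (a). Your argument yields at most one inclusion, and in fact not even that one: by Lemma \ref{lem.ch6} the matrices of the $\SL(2,\BC)$ lift have entries in $\BQ(z_1,\dots,z_s,e_{\l},e_{\mu})$, not in $\BQ(z_1,\dots,z_s)$ (normalizing determinants of the face-pairing matrices introduces square roots), so ``the traces are rational in the $z_j$'' is unjustified; and the reverse inclusion --- that the numbers $z_j$ lie in the field generated by the trace values --- is never addressed. That reverse inclusion is the genuinely arithmetic input (shape field equals invariant trace field equals trace field, the last equality using the no--$2$-torsion hypothesis), and it is precisely why the paper proves (b) by invoking \cite[Cor.~3.2.4]{MR} rather than by evaluating (a) at a point. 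Note also that specializing a function-field identity at a particular point is itself delicate, since the rational expressions involved may have poles there; some genericity or regularity argument is needed even for the direction you do attempt.
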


\begin{proof}
The shape parameters $z_j$  , for $j=1,\dots,s$, are coordinate functions on
the curve $\calG(\calT)$. In addition, the squares $e_{\l}^2$ and 
$e_{\mu}^2$ of the eigenvalues of a meridian-longitude pair $(\l, \m)$ of 
$\pt M$
are rational functions of the shape parameters $z_j$. Since the map in 
Equation \eqref{eq.Ggg}
is an inclusion of a curve into another, it follows that their fields
of rational functions are equal. This proves Equation \eqref{eq.ch5a}.
Equation \eqref{eq.ch5b} follows from Lemma 
\ref{lem.ch3} and the fact that 
$e_{\l}^2, e_{\mu}^2 \in \BQ(z_1,\dots,z_s)$. This proves part (a). Part (b)
follows from \cite[Cor.3.2.4]{MR}. 
\end{proof}

\begin{proof}(of Lemma \ref{lem.ch4})
It follows by applying verbatim the proof of \cite[Lem.5.5.2]{MR}. 
\end{proof}

Let us end this section with an alternative version of Lemma \ref{lem.ch3}
using shape fields. Recall from \cite[Sec.2]{Dn} that the map in 
Equation \eqref{eq.Gg}
can be defined as follows. Fix a solution $(z_1,\dots,z_s)$
of the Gluing Equations of $\calT$. Lift $\calT$ to an ideal triangulation of 
$\widetilde{M}$, and then map the lift of one ideal tetrahedron to a fixed
ideal tetrahedron of $\BH^3$ of the same shape, and then use 
$\pi_1(M)$-equivariance to send every other ideal tetrahedron to an appropriate
ideal tetrahedron of $\BH^3$, using face-pairings. There is a consistency 
condition, which is satisfied since we are using a solution to the Gluing 
Equations. This defines a developing map and a corresponding 
$\PSL(2,\BC)$-representation $\rho$.
In \cite[App.~10]{BDR-V}, Dunfield describes how to define not
only a representation in $\PSL(2,\BC)$, but also an augmented one
$(\rho,z)$. 

The combinatorial structure of $\calT$ gives a presentation of 
$\Pi = \pi_1(M)$ in terms of {\em face-pairings}: 
\begin{equation}
\lbl{WirtingerP}
\Pi = \left\langle {g_1, \ldots, g_s \;|\; r_1, \ldots, r_{s-1}} \right\rangle.
\end{equation}
Each generator of $\Pi$ is represented by a path in the $1$-skeleton of the 
dual triangulation of $\calT$; see~\cite[Chap.~5]{MR} or \cite[Ch.11]{R}. 
The entries of $\rho(g_j)$, for $j = 1, \ldots, s$, are given by 
face-pairings, and 
are explicit matrices with entries in $\BQ(z_1,\dots,z_s)$; 
see~\cite[Chap.~5]{MR}. The above discussion
proves the following version of Lemma \ref{lem.ch3}.

\begin{lemma}
\lbl{lem.ch6}
\begin{enumerate}
  \item The image of the map in Equation 
\eqref{eq.Gg} is defined over $\BQ(z_1,\dots,z_s)$.
  \item  Generically, a lift of the image of the map in Equation 
\eqref{eq.Gg} to
$\overline{R}({M,\SL(2,\BC)})$ is defined over 
$\BQ(z_1,\dots,z_s,e_{\l},e_{\mu})$.
\end{enumerate}

\end{lemma}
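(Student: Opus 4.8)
The plan is to read both statements directly off Dunfield's explicit construction of the map in \eqref{eq.Gg}, which was recalled in the discussion preceding the lemma. First I would fix a solution $(z_1,\dots,z_s) \in \calG(\calT)$ of the gluing equations and lift $\calT$ to an ideal triangulation of $\widetilde{M}$. Placing one initial ideal tetrahedron in standard position in $\BH^3$ — with three of its ideal vertices at $0,1,\infty \in \BC\BP^1$ and the fourth at the point prescribed by its shape parameter — the developing map is built by propagating across faces via face-pairing isometries. Each such face-pairing is a M\"obius transformation carrying one standard tetrahedron onto its neighbor, and its $\PSL(2,\BC)$-matrix has entries that are explicit rational functions of the shape parameters, as recorded in \cite[Chap.~5]{MR}. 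Since the presentation \eqref{WirtingerP} expresses each generator $g_j$ as a word in these face-pairings, the matrix $\rho(g_j)$ has entries in $\BQ(z_1,\dots,z_s)$, and the distinguished fixed point $z$ of the peripheral subgroup (the image of the relevant ideal vertex) is likewise a point of $\BC\BP^1$ with coordinate in $\BQ(z_1,\dots,z_s)$. This establishes part (1): the augmented $\PSL(2,\BC)$-representation in the image of \eqref{eq.Gg} is defined over $\BQ(z_1,\dots,z_s)$.

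For part (2), I would lift the above $\PSL(2,\BC)$-representation to $\SL(2,\BC)$. Such a lift exists because the homology of $M$ has no $2$-torsion; see \cite{Cu}. Concretely, choosing $\SL(2,\BC)$ representatives of the face-pairing matrices amounts to selecting square roots of the determinants of $\mathrm{GL}(2,\BC)$-lifts; the only genuinely new scalars this introduces are the eigenvalues of the peripheral elements, since the eigenvalue-squares $e_{\l}^2, e_{\mu}^2$ already lie in $\BQ(z_1,\dots,z_s)$ by Lemma \ref{lem.ch5}. Tracking the augmentation — i.e.\ the chosen fixed point — pins down which square root of $e_{\l}^2$ (resp.\ $e_{\mu}^2$) is the eigenvalue $e_{\l}$ (resp.\ $e_{\mu}$) of the lift, so these become well-defined regular functions. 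Hence, generically, the lifted augmented $\SL(2,\BC)$-representation has all matrix entries in $\BQ(z_1,\dots,z_s,e_{\l},e_{\mu})$, proving part (2). The same conclusion also follows from Lemma \ref{lem.ch3}(2) combined with the identification $E(\rho,z)=\BQ(z_1,\dots,z_s,e_{\l},e_{\mu})$ of Lemma \ref{lem.ch5}(b).

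The main obstacle I anticipate is part (2): making precise that no field extension beyond the adjunction of $e_{\l}$ and $e_{\mu}$ is forced by a coherent $\SL(2,\BC)$-lift. The delicate point is that face-pairing matrices are intrinsically projective, so lifting an entire word in them to $\SL(2,\BC)$ in such a way that the relations $r_1,\dots,r_{s-1}$ hold on the nose — rather than merely up to sign — is exactly where the no-$2$-torsion hypothesis and the eigenvalue bookkeeping must be combined. Rather than re-deriving the sign choices generator by generator, I would route around this by invoking \cite[Cor.~3.2.4]{MR}, precisely as in the proof of Lemma \ref{lem.ch5}(b), to conclude that the coefficient field of the lift is the trace field extended by a single peripheral eigenvalue, and then identify that eigenvalue with $e_{\l}$ or $e_{\mu}$ via the augmentation.
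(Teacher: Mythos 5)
Your proposal is correct and takes essentially the same route as the paper: part (1) is read off Dunfield's explicit construction of the developing map from a solution of the Gluing Equations, with face-pairing matrices having entries in $\BQ(z_1,\dots,z_s)$ by \cite[Chap.~5]{MR}, which is precisely the discussion the paper places immediately before the lemma. For part (2) the paper is terser---it lets ``the above discussion'' carry the lift implicitly---but your fallback via Lemma \ref{lem.ch3}(2), Lemma \ref{lem.ch5}(b) and \cite[Cor.~3.2.4]{MR} is exactly the chain of citations the paper itself uses for the corresponding statements, so your extra care about the sign ambiguities in lifting face-pairings to $\SL(2,\BC)$ only adds detail the paper omits.
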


\section{The non-abelian Reidemeister torsion}
\lbl{sec.proofs}

\subsection{An explanation of the rationality of the Reidemeister torsion in 
dimension 3}
\lbl{sub.explanation}

Before we prove the rationality of the torsion stated in Theorem \ref{thm.1},
let us give the main idea which is rather simple, and defer the 
technical details for the next section.

The starting point is a hyperbolic manifold $M$ with one cusp. 
The character variety $\overline{X}_{M,\SL(2,\BC)}$ depends only on $\pi_1(M)$ 
but we view it in a specific birational equivalent way by using a combinatorial
decomposition of $M$ into ideal tetrahedra. Every such
manifold is obtained by a combinatorial face-pairing of a finite collection 
$\calT$
of nondegenerate (but perhaps flat, or negatively oriented)
ideal tetrahedra $\calT_1,\dots, \calT_s$. The hyperbolic shape of a
nondegenerate ideal tetrahedron is determined by a complex number 
$z \in \BC\setminus\{0,1\}$, up to the action of a finite group of order 6.
The discrete faithful representation $\rho_0$ assigns hyperbolic shapes $z_j$
to the tetrahedra $\calT_j$ for $j=1,\dots,s$.   As we already observe, these shapes satisfy the 
so-called Gluing Equations, which is a collection of polynomial equations
in $z_j$ and $1-z_j$ to make the metric match along the edges of the ideal
tetrahedra. The Gluing Equations define a variety $\calG(\calT)$ which of
course depends on $\calT$. When the discrete faithful representation $\rho_0$
slightly deforms in $\rho_t$ (i.e., bends, in the language of Thurston) this
causes the shapes $z_j$ of $\calT_j$ to deform to $z_j(t)$. For small enough
$t$, the new shapes still satisfy the Gluing Equations. Consequently, for 
every $t$, the shapes $z_j(t)$  , for $j=1,\dots,s$, are algebraically dependent, 
and
so is any algebraic function of the shapes.

In the case of the $A$-polynomial, the squares $e_\l (t)^2$ and $e_\m (t)^2$ 
of the
eigenvalues $e_\l (t)$ and $e_\m (t)$ of a meridian-longitude pair of 
$T^2=\pt M$ 
are rational functions in $z_j(t)$ (in fact, monomials in $z_j(t)$ and 
$1-z_j(t)$ with integer exponents), thus
$(e_\l (t),e_\m (t))$ are algebraically dependent. This dependence defines the 
$A$-polynomial.
 
In the case of Reidemeister torsion and Theorem \ref{thm.1}, the torsion 
$\tau_{\mu}(\rho_t)$ of the relevant chain complex is defined over the
field $\BQ(z_1(t),\dots,z_s(t),e_\l (t),e_\m (t))$. In other words all 
matrices 
that compute the torsion (and thus the ratios of their determinants)
have entries in the field  $\BQ(z_1(t),\dots,z_s(t),e_\l (t),e_\m (t))$.

\subsection{Proof of Theorem \ref{thm.1}}
\lbl{sub.thm1}

In this section, we will prove Theorem \ref{thm.1}.
Let $M$ be a one-cusp finite-volume complete hyperbolic $3$-manifold.
Choose an ideal triangulation $\calT=(\calT_1,\dots,\calT_s)$ compatible with 
the discrete faithful representation of $M$ as described above, and let 
$(z_1,\dots,z_s)$
denote the shape parameters of $\calT$. Let $E$ denote the following
field:
$$
\BK=\BQ(z_1,\dots,z_s,e_{\l},e_{\mu})=\BQ(\overline{X}_{M,\SL(2,\BC)})
$$
where the last equality follows from Lemma \ref{lem.ch5}.

Let $J$ denote an open interval in $\BR$ that contains $0$, and 
consider a 1-parameter family $t \in J \mapsto  
z(t)=(z_1(t),\dots,z_s(t)) \in \calG(\calT)$ 
of solutions of the Gluing Equations, 
with image $(\rho_t',z'_t) \in \overline{R}({M,\PSL(2,\BC)})$ under the 
map in Equation \eqref{eq.Gg} and with lift 
$(\rho_t,z_t) \in \overline{R}({M,\SL(2,\BC)})$
where $\rho_0$ is a lift to $\SL(2,\BC)$ of the discrete faithful 
representation of $M$. Fix $\ga$ an essential curve in the boundary torus 
$\pt M$.

We will explain how to define the Reidemeister torsion $\tau_{\ga}(\rho_t)$ 
(for complete definitions 
the reader can refer to Porti's monograph~\cite{Po} and to Turaev's 
book~\cite{TM}), and why 
it coincides with the evaluation of an element of $\BK$ at $\rho_t$.

The 2-skeleton of the combinatorial dual $W$ to $\calT$ is a 2-dimensional
$CW$-complex which is a spine of $M$; see \cite{BP}. Mostow rigidity Theorem 
implies that every homotopy equivalence of $M$ is homotopic to a 
homeomorphism (even to an isometry), and Chapman's theorem concludes 
that every homotopy equivalence of $M$ is simple; \cite{Co}. 
Thus, $W$ is simple homotopy equivalent to $M$, and we can use $W$ to 
compute $\tau_{\ga}(\rho_t)$. 
The ideas of the definition of the non-abelian torsion $\tau_{\ga}(\rho_t)$ 
are the following:

\begin{itemize}
\item[(a)]
Consider the universal cover $\widetilde{W}$ of $W$ and the integral chain 
complex 
$C_*(\widetilde{W}; \ZZ)$ of $\widetilde{W}$ for $*=0,1,2$. The fundamental  
group $\Pi=\pi_1(W)=\pi_1(M)$ acts on $\widetilde{W}$ by covering 
transformations. This action turns the complex $C_*(\widetilde{W}; \ZZ)$ 
into
a $\BZ[\Pi]$-module. The Lie algebra $\sll$ also can be viewed as a 
$\BZ[\Pi]$-module by using the composition $Ad \circ \rho_t$, where $Ad$ 
denotes the adjoint representation  of $\sll$. We let $\sll_{\rho_t}$ denote 
this $\BZ[\Pi]$-module. The {\em twisted chain complex} of $W$ is 
the $\BC$-vector space:
\begin{equation}
\lbl{eq.twistedC}
C_*^{\rho_t} 
= C_*(\widetilde{W}; \ZZ) \otimes_{\ZZ[\Pi]} \sll_{\rho_t}.
\end{equation}
\item[(b)]
The twisted chain complex $C_*^{\rho_t}$ computes the so-called 
{\em twisted homology} of $W$ which is denoted by $H_*^{\rho_t}$. 
The betti numbers of $H_*^{\rho_t}$ are given by (because $\rho_t$ lies in a 
neighborhood of the discrete and faithful representation and thus is   generic, or regular in Porti's language, 
see~\cite[Chap.~3]{Po}): 
$$
\dim_\BC (H_{0}^{\rho_t})=0, \qquad \dim_\BC (H_{1}^{\rho_t})=1,
\qquad \dim_\BC (H_{2}^{\rho_t})=1.
$$
\item[(c)] For $i=1,2$ construct elements $\mathbf{h}^t_{i}$ in 
$C_{i}^{\rho_t}$, 
which project to bases of the twisted homology groups 
$H_{i}^{\rho_t}$.
\item[(d)] Then, the torsion $\tau_{\ga}(\rho_t)$ is an explicit ratio of
determinants; see~\cite{Db} or~\cite[Chap.~3]{Po} and Equation 
\eqref{eq.torsiondef} below.
\end{itemize}
We now give the details of the definition of the non-abelian Reidemeister 
torsion and 
prove Theorem \ref{thm.1}. To clarify the presentation, suppose that
$V_t$ is a 1-parameter family of $\BC$-vector spaces for $t \in J$. 
We will say that $V_t$ is defined over $\BK$ if there exists a 
vector space $V_\BK$ over $\BQ$ such that 
$V_t=(V_\BK \otimes_{\BQ} E(\rho_t,z_t)) \otimes_{\BQ} \BC$ 
for all $t \in J$, where $E(\rho_t,z_t)$
is the coefficient field of $(\rho_t,z_t)$, defined in Section 
\ref{sub.cfield}. Likewise, a 1-parameter family of $\BC$-linear
transformations $T_t \in \Hom_{\BC}(V_t,W_t)$ is defined over $\BK$ 
if $T \in \Hom_{\BQ}(V_\BK,W_\BK) \otimes_{\BQ} \BC$. 
In concrete terms, a 1-parameter family of matrices (resp. vectors) 
is defined over $E$ if its entries (resp. coordinates) lie in $\BK$.

Lemma \ref{lem.ch6} implies the following.

\begin{claim}
\lbl{claim.rho}
The 1-parameter family $(\rho_t,z_t)$ ($t \in J$) is defined over $\BK$.
\end{claim}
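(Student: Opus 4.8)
The plan is to unwind the definition of ``defined over $\BK$'' and verify it directly for the family $(\rho_t,z_t)$, where $\BK=\BQ(z_1,\dots,z_s,e_\l,e_\mu)=\BQ(\overline X_{M,\SL(2,\BC)})$. First I would recall the concrete description of $\rho_t$ furnished by Lemma~\ref{lem.ch6}. That lemma gives a presentation $\Pi=\langle g_1,\dots,g_s\mid r_1,\dots,r_{s-1}\rangle$ coming from the face-pairings of the ideal triangulation $\calT$, and asserts that each matrix $\rho_t(g_j)$ has entries that are explicit rational functions of the shape parameters $z_1(t),\dots,z_s(t)$, i.e.\ entries lying in $\BQ(z_1,\dots,z_s)$. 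Part~(2) of Lemma~\ref{lem.ch6} then says that, generically, the chosen $\SL(2,\BC)$-lift together with the eigenvalue data $z_t$ is defined over $\BQ(z_1,\dots,z_s,e_\l,e_\mu)$, which is exactly $\BK$.

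Given this, the verification is essentially bookkeeping. For a fixed $t\in J$ the coefficient field $E(\rho_t,z_t)$ is, by Lemma~\ref{lem.ch5}(b), equal to $\BQ(z_1,\dots,z_s,e_\l,e_\mu)$ evaluated at the point $z(t)$; thus the matrix entries of each $\rho_t(g_j)$ and the eigenvalue $e_\ga(\rho_t)$ attached to $z_t$ are all obtained by evaluating \emph{fixed} elements of the function field $\BK$ at the solution $z(t)\in\calG(\calT)$. In the language introduced just before the claim, this is precisely what it means for the one-parameter family of representation matrices (and hence the family $(\rho_t,z_t)$ itself) to be defined over $\BK$: there is a single $\BQ$-structure, namely the entries regarded as elements of $\BK$, whose specialization at $z(t)$ recovers $(\rho_t,z_t)$ for every $t$. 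I would simply collect the generators $\rho_\bullet(g_j)$ into the required $V_\BK$ / $\Hom_\BQ$ data and note that the family arises by the base change $(V_\BK\otimes_\BQ E(\rho_t,z_t))\otimes_\BQ\BC$ demanded by the definition.

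The one genuine point requiring care, rather than pure bookkeeping, is the uniformity in $t$: I must make sure that the \emph{same} rational functions in $\BK$ work simultaneously for all $t\in J$, and that the chosen conjugate/lift does not jump. This is where the word ``generically'' in Lemma~\ref{lem.ch6}(2) and the genericity of the nearby representations (the Zariski-open condition recalled in Section~\ref{sub.cfield}, together with the finiteness of elementary characters) are used: on the interval $J$ the family $z(t)$ stays in the generic locus, so a single global choice of $N\in\SL(2,\BC)$ and of eigenvalue branch $e_\ga$ is valid throughout, by Lemma~\ref{lem.ch3}(3). Thus the only obstacle worth flagging is the possibility of a degenerate specialization, and this is ruled out by shrinking $J$ so that $z(t)$ avoids the (finite) bad locus.

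In summary, the proof is the observation that Claim~\ref{claim.rho} is a direct translation of Lemma~\ref{lem.ch6}: the face-pairing matrices give entries in $\BQ(z_1,\dots,z_s)$, adjoining the eigenvalues $e_\l,e_\mu$ produces the coefficient field $\BK$, and the genericity along $J$ guarantees that a single $\BK$-rational model specializes to $(\rho_t,z_t)$ for every $t$. I would therefore write the proof as a short paragraph invoking Lemma~\ref{lem.ch6} and the definition of ``defined over $\BK$,'' with the genericity remark as the only substantive input.
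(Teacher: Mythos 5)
Your proof is correct and takes essentially the same route as the paper, whose entire argument for Claim~\ref{claim.rho} is the single sentence ``Lemma~\ref{lem.ch6} implies the following'': the face-pairing matrices have entries in $\BQ(z_1,\dots,z_s)$, and the $\SL(2,\BC)$-lift together with its eigenvalue data is defined over $\BK=\BQ(z_1,\dots,z_s,e_\l,e_\mu)$. Your added attention to uniformity in $t$ (a single conjugate/lift valid on all of $J$, via genericity and Lemma~\ref{lem.ch3}) is a sensible unpacking of what the paper leaves implicit.
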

Consider the presentation  $\Pi$ in Equation \eqref{WirtingerP} of $\pi_1(M)$
given by face-pairings.
A coordinate description of the chain complex $C_*^{\rho_t}$ is given by
(see \cite{Db})
$$
\lbl{eq.coordchain}
\xymatrix@1{
0 \ar[r] & {\sll^{s-1}}\ar[r]^-{d_2^{\rho_t}}
& {\sll^s}\ar[r]^-{d_1^{\rho_t}} & {\sll}\ar[r] & 0
}
$$
for $*=0,1,2$ where the boundary operators are given by
$$
d_1^{\rho_t}(x_1, \ldots, x_s)  = \sum_{j=1}^s (1 - g_j) \circ x_j,
\text{ and }
d_2^{\rho_t}(x_1, \ldots, x_{s-1}) = {\left( {\sum_{j=1}^{s-1}  
\frac{\partial r_j}{\partial g_k} 
\circ x_j}\right)}_{1 \leqslant k \leqslant s}.
$$
Here $g \circ x = Ad_{\rho_t(g)}(x)$ and $\frac{\partial r_j}{\partial g_k}$
denotes the {\em Fox derivative} of $r_j$ with respect to $g_k$.
The above description of $C_*^{\rho_t}$ and Claim \ref{claim.rho} imply
the following.

\begin{claim}
\lbl{claim.k}
The 1-parameter family $C_*^{\rho_t}$ ($t \in J$) is defined over $\BK$.
\end{claim}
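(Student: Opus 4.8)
The plan is to reduce the claim to Claim \ref{claim.rho} together with the explicit combinatorial formulas for $d_1^{\rho_t}$ and $d_2^{\rho_t}$. First I would fix a $t$-independent $\BQ$-structure on the chain groups: the standard basis $H=\mat{1}{0}{0}{-1}$, $E=\mat{0}{1}{0}{0}$, $F=\mat{0}{0}{1}{0}$ identifies $\sll\cong\mathfrak{sl}_2(\BQ)\otimes_\BQ\BC$, so $\sll^{s-1}$, $\sll^s$, $\sll$ are manifestly defined over $\BQ\subset\BK$, with bases independent of $t$. Hence the entire content of the claim is that, in these fixed bases, the matrices of $d_1^{\rho_t}$ and $d_2^{\rho_t}$ have entries in $\BK$.

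The one computational input I would isolate is that the adjoint action of any group element is defined over $\BK$. By Claim \ref{claim.rho} (using the fixed conjugation of Lemma \ref{lem.ch6}), each $\rho_t(g_j)$ has entries in $\BK$; since $\det\rho_t(g_j)=1$, the inverse $\rho_t(g_j)^{-1}$ has entries in $\BK$ as well. For any $A\in\SL(2,\BK)$ the operator $Ad_A(x)=AxA^{-1}$ on $\sll$ has matrix entries in the $\{H,E,F\}$ basis that are polynomial in the entries of $A$ and $A^{-1}$, hence lie in $\BK$. Because $\BK$ is a field and $\rho_t$ is a homomorphism, the same holds for $\rho_t(w)$ and $Ad_{\rho_t(w)}$ for every word $w$ in the $g_j^{\pm 1}$.

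With this in hand, the two boundary maps are immediate. The map $d_1^{\rho_t}$ has $j$-th block $\mathrm{Id}-Ad_{\rho_t(g_j)}$, which lies over $\BK$ by the previous paragraph. For $d_2^{\rho_t}$ the extra ingredient is the Fox derivative $\partial r_j/\partial g_k\in\BZ[\Pi]$, a finite $\BZ$-linear combination $\sum_w n_w\, w$ of group elements; applying $Ad\circ\rho_t$ produces the block $\sum_w n_w\, Ad_{\rho_t(w)}$, an integer combination of operators each defined over $\BK$, hence again a matrix over $\BK$. Assembling the blocks shows $d_2^{\rho_t}$ is defined over $\BK$, which proves the claim.

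I do not anticipate a genuine obstacle: the statement is a bookkeeping consequence of Claim \ref{claim.rho} and the closure of the field $\BK$ under the ring operations used to build $Ad_{\rho_t}$. The only point needing care is that the Fox derivatives and $d_1^{\rho_t}$ involve inverse generators, so one must explicitly invoke $\det\rho_t(g_j)=1$ to keep $\rho_t(g_j)^{-1}$ over $\BK$; everything else follows from $\rho_t$ being a homomorphism into $\SL(2,\BK)$ and from $Ad$ being polynomial in the matrix entries.
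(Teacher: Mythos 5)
Your proposal is correct and follows exactly the paper's route: the paper deduces Claim \ref{claim.k} from the coordinate description of $C_*^{\rho_t}$ (via the face-pairing presentation, Fox derivatives, and the adjoint action) together with Claim \ref{claim.rho}, and your argument simply spells out the bookkeeping that the paper leaves implicit. The points you isolate --- a fixed $\BQ$-basis of $\sll$, $\BK$-rationality of $Ad_{\rho_t(g)}$ including inverses via $\det=1$, and $\BZ$-linearity of the Fox derivative blocks --- are precisely the intended content.
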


Next, we construct a 1-parameter family of 
basing elements $\mathbf{h}^t_{i}$ for $i=1,2$ and show
that it is defined over $\BK$.
Let $\left\{e^{(i)}_1, \ldots, e^{(i)}_{n_i}\right\}$ be the set of 
$i$-dimensional cells of $W$. We lift them to the universal cover and we 
choose an arbitrary order and an arbitrary orientation for the cells 
$\left\{ {\tilde{e}^{(i)}_1, \ldots, \tilde{e}^{(i)}_{n_i}} \right\}$. 
If $\mathcal{B} = \{\mathbf{a}, \mathbf{b}, \mathbf{c}\}$ is an orthonormal 
basis of $\sll$, then we consider the corresponding  (geometric) basis over 
$\BC$: 
$$
\mathbf{c}^{i}_{\mathcal{B}} = 
\left\{ \tilde{e}^{(i)}_{1} \otimes \mathbf{a}, \tilde{e}^{(i)}_{1} 
\otimes \mathbf{b}, \tilde{e}^{(i)}_{1} \otimes \mathbf{c}, \ldots, 
\tilde{e}^{(i)}_{n_i}\otimes \mathbf{a}, \tilde{e}^{(i)}_{n_i} \otimes 
\mathbf{b}, \tilde{e}^{(i)}_{n_i}\otimes \mathbf{c}\right\}
$$ 
of $C_i^{\rho_t}$. We fix a generator $P^{\rho_t}$ of 
$H_0^{\rho_t}(\partial M) 
\subset C_{0}^{\rho_t}$ 
i.e., $P^{\rho_t} \in \sll$ is such that 
$Ad_{\rho_t(g)}(P^{\rho_t}) = P^{\rho_t}$ for all $g \in \pi_1(\partial M)$. 

\begin{claim}
\lbl{claim.P}
The 1-parameter family $P^{\rho_t}$ ($t \in J$) is defined over $\BK$.
\end{claim}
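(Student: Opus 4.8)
The plan is to produce an explicit $\BK$-rational formula for $P^{\rho_t}$, exploiting the fact that the $Ad$-invariants of a peripheral subgroup of $\SL(2,\BC)$ in $\sll$ are computed by a centralizer. First I would invoke Claim \ref{claim.rho} (via Lemma \ref{lem.ch6}): after conjugating by the single fixed matrix $N$, the matrices $\rho_t(\mu)$ and $\rho_t(\lambda)$ have entries in $\BK$ for all $t \in J$, and consequently their traces $\tr(\rho_t(\mu)), \tr(\rho_t(\lambda))$ lie in $\BK$ as well. Recall also from the coordinate description that $C_0^{\rho_t}=\sll$, so $P^{\rho_t}$ is literally an element of $\sll$.

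The key observation is that for a non-central matrix $A \in \SL(2,\BC)$, the traceless matrices $\xi \in \sll$ fixed by $Ad_A$ (equivalently, commuting with $A$) form a $1$-dimensional space spanned by the traceless part $A - \frac{1}{2}\tr(A)\,I$ of $A$. I would therefore set
$$ P^{\rho_t} = \rho_t(\mu) - \tfrac{1}{2}\tr(\rho_t(\mu))\,I \in \sll. $$
Since $\mu$ and $\lambda$ generate the abelian group $\pi_1(\pt M)\cong\BZ^2$, the images $\rho_t(\mu)$ and $\rho_t(\lambda)$ commute, so this $P^{\rho_t}$ commutes with both of them and hence with all of $\rho_t(\pi_1(\pt M))$. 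Thus $Ad_{\rho_t(g)}(P^{\rho_t}) = P^{\rho_t}$ for every $g \in \pi_1(\pt M)$, i.e. $P^{\rho_t} \in H_0^{\rho_t}(\pt M)$.

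Next I would check that $P^{\rho_t}$ is a \emph{generator}. Because $\rho_t$ lies in a neighborhood of the discrete faithful representation $\rho_0$ and $\rho_0(\mu)$ is parabolic (in particular non-central), after shrinking $J$ if necessary the matrix $\rho_t(\mu)$ remains non-central, so $P^{\rho_t} \neq 0$; as $\dim_\BC H_0^{\rho_t}(\pt M) = 1$, the nonzero invariant vector $P^{\rho_t}$ spans it. Finally, reading off coordinates in the fixed orthonormal basis $\mathcal{B} = \{\mathbf{a},\mathbf{b},\mathbf{c}\}$ of $\sll$, the entries of $\rho_t(\mu)$ and the scalar $\tr(\rho_t(\mu))$ all lie in $\BK$, so the coordinates of $P^{\rho_t}$ lie in $\BK$; by the definition in Section \ref{sub.thm1} this is exactly the assertion that the family $P^{\rho_t}$ is defined over $\BK$.

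I expect the only delicate point to be guaranteeing that the single algebraic expression $\rho_t(\mu) - \tfrac{1}{2}\tr(\rho_t(\mu))\,I$ remains a nonzero generator of the $1$-dimensional space $H_0^{\rho_t}(\pt M)$ uniformly in $t$; this is where the hypothesis that $\rho_t$ is near $\rho_0$ (regular in Porti's sense, so that $\dim H_0^{\rho_t}(\pt M)=1$ and $\rho_t(\mu)\neq \pm I$) enters. A coordinate-free alternative, which avoids privileging $\mu$ over $\lambda$, is to present $P^{\rho_t}$ as a generator of the kernel of the $\BK$-rational linear map $\sll \to \sll \oplus \sll$ given by $x \mapsto \big((Ad_{\rho_t(\mu)}-\mathrm{Id})x,\, (Ad_{\rho_t(\lambda)}-\mathrm{Id})x\big)$: since the rank of a matrix is insensitive to field extension, a kernel that is $1$-dimensional over $\BC$ is already cut out over $\BK$ and admits a $\BK$-rational basis vector.
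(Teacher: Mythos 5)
Your proposal is correct, but your primary argument is genuinely different from the paper's. The paper's proof is exactly your closing ``coordinate-free alternative'': it observes that $P^{\rho_t}$ generates $\ker (Ad_{\rho_t(\mu)} - \mathbf{1}) \cap \ker (Ad_{\rho_t(\lambda)} - \mathbf{1})$, and since this family of linear maps on $\sll$ is defined over $\BK$ (Claim \ref{claim.k}), a one-dimensional kernel over $\BC$ is already cut out over $\BK$ and has a $\BK$-rational generator --- no explicit vector is ever written down. Your main route instead produces the concrete generator $P^{\rho_t} = \rho_t(\mu) - \tfrac{1}{2}\tr(\rho_t(\mu))\,I$, using that the commutant in $M_2(\BC)$ of a non-central $A$ is $\BC[A]$, whose traceless part is spanned by $A - \tfrac{1}{2}\tr(A)I$; commutativity of $\rho_t(\mu)$ and $\rho_t(\lambda)$ then gives invariance under all of $\pi_1(\pt M)$. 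What your formula buys is transparency and uniformity: the $\BK$-rationality is visible at a glance from Claim \ref{claim.rho}/Lemma \ref{lem.ch6}, and one sees exactly where the hypothesis ``$\rho_t$ near $\rho_0$'' enters (non-centrality of $\rho_t(\mu)$, open because $\rho_0(\mu)$ is parabolic). What the paper's argument buys is symmetry and robustness: it does not privilege $\mu$ over $\lambda$, and it works verbatim whenever the invariant subspace is one-dimensional, without any case analysis on which peripheral element might degenerate. Both arguments are sound; yours is the more explicit, the paper's the more economical.
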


\begin{proof}
Observe that $P^{\rho_t}$ is a generator of  the intersection 
$$\ker (Ad_{\rho_t(\mu)} - \mathbf{1}) \cap \ker (Ad_{\rho_t(\lambda)} - 
\mathbf{1}).$$ Since this family of vector spaces and linear maps is defined 
over $\BK$ (by Claim \ref{claim.k}), the result follows.
\end{proof}

The canonical inclusion $j\colon \partial M \to M$ induces 
{(see~\cite[Corollary 3.23]{Po})} an isomorphism 
$$
j_*\colon 
H_2^{\rho_t}(\partial M) \to H_2^{\rho_t}(M) \simeq H_2^{\rho_t}(W) 
= \ker d^{\rho_t}_2 
\subset C_2^{\rho_t}.
$$  
Moreover, one can prove that {(see~\cite[Proposition 3.18]{Po})}
$$
H_2^{\rho_t}(\partial M) \cong H_2(\partial M; \ZZ) \otimes \BC.
$$
More precisely, let $\lbrack \! \lbrack \partial M \rbrack \! 
\rbrack \in H_2(\partial M; \ZZ)$ be the fundamental class induced by 
the orientation of $\partial M$, one has 
$H_2^{\rho_t}(\partial M) = \BC
\left[\lbrack \! \lbrack \partial M \rbrack \! \rbrack \otimes 
P^\rho_t\right]$.
The \emph{reference generator} of $H_2^{\rho_t}(M)$ is defined by 

\begin{equation}
\lbl{EQ:Defh2}
\mathbf{h}^t_{2}
= j_*([\lbrack \! \lbrack \partial M \rbrack \! \rbrack \otimes P^{\rho_t}])
\in C_{2}^{\rho_t}.
\end{equation}
Claim~\ref{claim.P} implies that

\begin{claim}
\lbl{claim.h2}
The 1-parameter family $\mathbf{h}^t_{2}$ ($t \in J$) is defined over $\BK$.
\end{claim}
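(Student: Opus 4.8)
The plan is to unwind the definition \eqref{EQ:Defh2} and check that every ingredient entering $\mathbf{h}^t_{2}$ is defined over $\BK$. Recall that $\mathbf{h}^t_{2}=j_*([\lbrack \! \lbrack \partial M \rbrack \! \rbrack \otimes P^{\rho_t}])$ lives in $H_2^{\rho_t}(M)\cong \ker d_2^{\rho_t}\subset C_2^{\rho_t}$. Since the twisted chain complex $C_*^{\rho_t}$ is concentrated in degrees $0,1,2$ (there is no $C_3^{\rho_t}$), the top twisted homology is literally the group of top cycles, $H_2^{\rho_t}(W)=\ker d_2^{\rho_t}$. Thus $\mathbf{h}^t_{2}$ is not merely a homology class but a genuine chain in $C_2^{\rho_t}$, namely the image of the $2$-cycle $\lbrack \! \lbrack \partial M \rbrack \! \rbrack \otimes P^{\rho_t}$ of $\partial M$ under the chain map $j_{\#}$ induced by the inclusion $j\colon \partial M \hookrightarrow M$. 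This removes any ambiguity in the choice of representative, and reduces the claim to showing that both $j_{\#}$ and the cycle it is applied to are defined over $\BK$.

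For the cycle: the fundamental class $\lbrack \! \lbrack \partial M \rbrack \! \rbrack \in H_2(\partial M;\ZZ)$ is a fixed integral $2$-cycle, independent of the parameter $t$, once a $CW$ structure on $\partial M$ and lifts of its cells are fixed; in the geometric basis its coordinates are integers. Its tensor with $P^{\rho_t}$ therefore has coordinates that are $\ZZ$-multiples of the coordinates of $P^{\rho_t}$, which lie in $\BK$ by Claim \ref{claim.P}. Hence $\lbrack \! \lbrack \partial M \rbrack \! \rbrack \otimes P^{\rho_t}$ is defined over $\BK$ as a family in $C_2^{\rho_t}(\partial M)$. For the map: the chain map $j_{\#}$ is induced by a cellular approximation of the inclusion $\partial M \hookrightarrow M \simeq W$; on the geometric cellular bases it is given by incidence data over $\ZZ[\Pi]$, and the only $t$-dependence enters through the $\ZZ[\Pi]$-module structure of $\sll_{\rho_t}$, that is, through $Ad\circ\rho_t$. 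By Claim \ref{claim.k} (equivalently Claim \ref{claim.rho}) this twisting is defined over $\BK$, so $j_{\#}$ is a family of $\BK$-linear maps.

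Putting these together, $\mathbf{h}^t_{2}=j_{\#}(\lbrack \! \lbrack \partial M \rbrack \! \rbrack \otimes P^{\rho_t})$ is obtained by applying a $\BK$-defined linear map to a $\BK$-defined vector, and is therefore itself defined over $\BK$, as claimed. The one point requiring care is the passage from the homology-level isomorphism $j_*$ of \cite[Cor.~3.23]{Po} to an honest chain-level map defined over $\BK$; this is exactly where the vanishing of $C_3^{\rho_t}$ is used, since it guarantees that the image chain is already a cycle and that no bounding correction --- which might a priori require solving a linear system over a larger field --- is needed. With that observation in hand, the argument is purely formal.
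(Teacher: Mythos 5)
Your proof is correct and follows essentially the same route as the paper, which simply asserts that Claim \ref{claim.P} implies Claim \ref{claim.h2}: the only $t$-dependence in $\mathbf{h}^t_{2}$ comes from $P^{\rho_t}$ and from the $Ad\circ\rho_t$-twisting, both already known to be defined over $\BK$ by Claims \ref{claim.P} and \ref{claim.rho}/\ref{claim.k}. Your additional observation that $C_3^{\rho_t}=0$ forces $H_2^{\rho_t}(W)=\ker d_2^{\rho_t}$, so that $j_*$ can be realized by a chain map whose value on the cycle $\lbrack\!\lbrack \partial M \rbrack\!\rbrack \otimes P^{\rho_t}$ is independent of the cellular approximation, is a worthwhile elaboration of a point the paper leaves implicit.
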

Since $\rho_t$ is near $\rho_0$ and $\ga$ is admissible, 
the inclusion $\iota \colon \ga \longto M$ induces 
(see~\cite[Definition 3.21]{Po}) an \emph{isomorphism} 
$$
\iota^* \colon H^{\rho_t}_1(\ga) \to H^{\rho_t}_1(M)\simeq H_1^{\rho_t}(W) 
= \ker d_1^{\rho_t} / \mathrm{im}\, d_2^{\rho_t}.
$$
The \emph{reference generator} of the first twisted homology group 
$H_1^{\rho_t}(M)$ is defined by
\begin{equation}
\lbl{EQ:Defh1}
\mathbf{h}^t_{1} 
= \iota_*\left(\left[\lbrack \! \lbrack \ga \rbrack \! \rbrack  
\otimes P^\rho_t\right]\right) \in C_{1}^{\rho_t}.
\end{equation}

Claim~\ref{claim.P} implies that:
\begin{claim}
\lbl{claim.h1}
The 1-parameter family $\mathbf{h}^t_{1}$ ($t \in J$) is defined over $\BK$.
\end{claim}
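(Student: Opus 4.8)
The plan is to mirror the (unstated) proof of Claim~\ref{claim.h2}, tracing the construction of $\mathbf{h}^t_1$ through the inclusion $\iota$ and reducing everything to Claim~\ref{claim.P}. First I would fix the chain-level meaning of the cycle $\lbrack \! \lbrack \ga \rbrack \! \rbrack \otimes P^{\rho_t}$. Equipping the admissible curve $\ga$ with the standard $CW$-structure (one $0$-cell, one $1$-cell), the twisted complex $C_*(\ga;\sll_{\rho_t})$ reads $0 \to \sll \to \sll \to 0$ with first boundary $\mathbf{1}-Ad_{\rho_t(\ga)}$, so that $H_1^{\rho_t}(\ga)=\ker\!\left(Ad_{\rho_t(\ga)}-\mathbf{1}\right)$. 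Since $\ga \subset \pt M$, the vector $P^{\rho_t}$ --- being fixed by $Ad_{\rho_t(\mu)}$ and $Ad_{\rho_t(\l)}$ --- is fixed by $Ad_{\rho_t(\ga)}$ as well, so $\lbrack \! \lbrack \ga \rbrack \! \rbrack \otimes P^{\rho_t}$ is an honest $1$-cycle representing a generator of $H_1^{\rho_t}(\ga)$. Its coefficient $\lbrack \! \lbrack \ga \rbrack \! \rbrack$ is an integral fundamental cycle independent of $t$, while $P^{\rho_t}$ is defined over $\BK$ by Claim~\ref{claim.P}; hence this cycle is defined over $\BK$.

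Next I would supply a chain-level model for $\iota_*$. The idea is to homotope $\iota\colon\ga\to M$ into the $1$-skeleton of the spine $W$ (possible since $\ga$ is $1$-dimensional) and read off the induced chain map $\iota_\#\colon C_*(\ga;\sll_{\rho_t})\to C_*^{\rho_t}$ from the resulting cellular map. Once lifted to universal covers, $\iota_\#$ is represented by matrices whose entries are $\BZ[\Pi]$-combinations acting through $Ad_{\rho_t}$; by Claim~\ref{claim.rho} (equivalently Claim~\ref{claim.k}) these entries lie in $\BK$. Consequently $\mathbf{h}^t_1=\iota_\#\!\left(\lbrack \! \lbrack \ga \rbrack \! \rbrack\otimes P^{\rho_t}\right)$ is the image under a map defined over $\BK$ of a cycle defined over $\BK$, hence is itself defined over $\BK$, which is the assertion.

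The step I expect to be the main obstacle is the second one: exhibiting a cellular representative of $\iota$ whose $\BZ[\Pi]$-entries are independent of $t$. One must confirm that both the homotopy pushing $\ga$ into $W^{(1)}$ and the entries of $\iota_\#$ are purely combinatorial data, fixed once the homotopy class of $\ga$ in $\Pi=\pi_1(M)$ and the face-pairing presentation~\eqref{WirtingerP} are chosen, so that the sole $t$-dependence enters through $Ad_{\rho_t}$. Granting this, the conclusion follows at once; the complementary fact that $\mathbf{h}^t_1$ projects to a generator of the one-dimensional group $H_1^{\rho_t}(M)$ is already furnished by Porti~\cite[Def.~3.21]{Po} and plays no role in definability over $\BK$.
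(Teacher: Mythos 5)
Your proposal is correct and takes essentially the same approach as the paper, whose entire proof of Claim~\ref{claim.h1} is the one-line reduction ``Claim~\ref{claim.P} implies that\ldots''. Your elaboration --- the cycle $\lbrack \! \lbrack \ga \rbrack \! \rbrack \otimes P^{\rho_t}$ is defined over $\BK$ because the fundamental class is integral and $t$-independent while $P^{\rho_t}$ is handled by Claim~\ref{claim.P}, and the chain-level model of $\iota_*$ is purely combinatorial with all $t$-dependence entering through $Ad_{\rho_t}$, hence defined over $\BK$ by Claims~\ref{claim.rho} and~\ref{claim.k} --- is exactly the detail the paper leaves implicit.
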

Using the bases described above, the non-abelian Reidemeister torsion of 
the 1-parameter family
$\rho_t$ is defined by:
\begin{equation}
\lbl{eq.torsiondef}
\tau_{\ga}(\rho_t) = \mathrm{Tor}(C_*^{\rho_t}(W; \sll_{\rho_t}), 
\mathbf{c}^*_{\mathcal{B}}, \mathbf{h}_t^{*}) \in \BC^*.
\end{equation}
The torsion $\tau_{\ga}(\rho_t)$ is an invariant of $M$ which is \emph{well 
defined up to a sign}. Moreover, if $\rho_t$ and $\tilde\rho_t$ are two 
1-parameter family of representations which pointwise 
have the same character then 
$\tau_{\ga}(\rho_t) =\tau_{\ga}(\tilde\rho_t)$. Finally, one can observe that 
$\tau_{\ga}(\rho_t)$ 
does not depend on the choice of the invariant vector $P^{\rho_t}$ 
(see~\cite{Db}).

The above discussion implies that 
\begin{claim}
\lbl{claim.tau}
For every essential curve $\ga \in \pt M$,
the 1-parameter family $\tau_{\ga}(\rho_t)$ ($t \in J$) is defined over $\BK$.
\end{claim}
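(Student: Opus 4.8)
The plan is to read $\tau_{\ga}(\rho_t)$ straight off its definition in Equation \eqref{eq.torsiondef} as a signed product of determinants, and to check that every matrix entering this product has entries in $\BK$. Recall the Milnor--Turaev recipe (see \cite{Mi,TM,Db}) for the torsion of the based complex $C_*^{\rho_t}$ equipped with the homology bases $\mathbf{h}^t_i$: for $i=1,2$ one selects a family $b_i \subset C_i^{\rho_t}$ whose image $d_i^{\rho_t}(b_i)$ is a basis of the boundary subspace $\im d_i^{\rho_t} \subset C_{i-1}^{\rho_t}$ (with $b_0=b_3=\emptyset$), lifts each homology basis $\mathbf{h}^t_i$ to $\widetilde{\mathbf{h}}^t_i \subset C_i^{\rho_t}$, and forms the new basis $d_{i+1}^{\rho_t}(b_{i+1}) \sqcup \widetilde{\mathbf{h}}^t_i \sqcup b_i$ of $C_i^{\rho_t}$ for $i=0,1,2$. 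The torsion is then $\tau_{\ga}(\rho_t)=\prod_{i} \det\!\big[\, d_{i+1}^{\rho_t}(b_{i+1}) \sqcup \widetilde{\mathbf{h}}^t_i \sqcup b_i \,/\, \mathbf{c}^i_{\mathcal{B}}\,\big]^{(-1)^{i+1}}$, the alternating product of the determinants of the change-of-basis matrices expressing the new basis in the geometric basis $\mathbf{c}^i_{\mathcal{B}}$.

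Next I would verify that each of the three kinds of data is defined over $\BK$. The geometric bases $\mathbf{c}^i_{\mathcal{B}}$ are independent of $t$ and are built from the fixed orthonormal basis $\{\mathbf{a},\mathbf{b},\mathbf{c}\}$ of $\sll$ together with the chosen cells of $W$; hence they form a fixed reference frame, and the coordinate expression with respect to them of any vector defined over $\BK$ again has entries in $\BK$. The boundary operators $d_1^{\rho_t}$ and $d_2^{\rho_t}$, given by the Fox-derivative formulas, are defined over $\BK$ by Claim \ref{claim.k}, so the columns $d_{i+1}^{\rho_t}(b_{i+1})$ lie in $\BK$. Finally, the lifted homology bases $\widetilde{\mathbf{h}}^t_1$ and $\widetilde{\mathbf{h}}^t_2$ are defined over $\BK$ by Claims \ref{claim.h1} and \ref{claim.h2}.

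It then remains to choose the auxiliary families $b_i$ over $\BK$ uniformly in $t$. Since $d_i^{\rho_t}$ has matrix entries in $\BK$, I would select each $b_i$ to be a subset of the geometric basis vectors $\mathbf{c}^i_{\mathcal{B}}$ whose images $d_i^{\rho_t}(b_i)$ are linearly independent and span $\im d_i^{\rho_t}$; such a column subset is detected by a non-vanishing maximal minor, an element of $\BK$. Because the Betti numbers of $H_*^{\rho_t}$ are constant near $\rho_0$, the ranks of the $d_i^{\rho_t}$ do not jump, so a selection valid at $\rho_0$ remains valid on a Zariski-open neighborhood and a single combinatorial choice works for all small $t$; the coordinate vectors of the $b_i$ are then $\{0,1\}$-vectors, a fortiori over $\BK$. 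With this choice every column of every change-of-basis matrix lies in $\BK$, hence each determinant lies in $\BK$, and $\tau_{\ga}(\rho_t)$ is a ratio of nonzero elements of $\BK$, i.e.\ the evaluation at $\rho_t$ of an element of $\BK=\BQ(\overline{X}_{M,\SL(2,\BC)})$.

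The main obstacle I anticipate is precisely this last uniformity point: one must invoke the standard invariance of Reidemeister torsion under the auxiliary choices (of the $b_i$ and of the lifts $\widetilde{\mathbf{h}}^t_i$) so that the resulting element of $\BK$ is well defined as a rational function, and not merely produced pointwise. Granting this invariance --- which also underlies the sign ambiguity and the independence of the chosen invariant vector $P^{\rho_t}$ already recorded after Equation \eqref{eq.torsiondef} --- the determinant computation yields a genuine element of the function field $\BK$, well defined up to sign, which proves Claim \ref{claim.tau}.
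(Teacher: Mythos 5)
Your proposal is correct and follows essentially the same route as the paper: the paper's own proof of Claim \ref{claim.tau} is simply the observation that the preceding Claims \ref{claim.k}, \ref{claim.h1}, \ref{claim.h2} together with the determinant formula \eqref{eq.torsiondef} imply the result, with the Milnor--Turaev bookkeeping deferred to \cite{Db,Po}. You have merely made explicit what the paper leaves implicit --- the choice of the auxiliary families $b_i$ over $\BK$ uniformly in $t$ via non-vanishing minors and constancy of the Betti numbers --- which is a faithful expansion of the intended argument rather than a different one.
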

In other words, there exist $\hat
\tau_{\ga} \in \BQ(\overline{X}_{M,\SL(2,\BC)})$ such that for $(\rho_t,z)$
near $(\rho_0,z_0)$ we have $\tau_{\ga}(\rho)=\hat \tau_{\ga}(\rho_t,z)$.
Since the left hand side does not depend on $z$, it follows from Section
\ref{sub.four} that
$\hat \tau_{\ga} \in \BQ(X_{M,\SL(2,\BC)})$. This concludes the
proof of Theorem \ref{thm.1}.
\qed

\subsection{Proof of Theorems  \ref{thm.11} and \ref{thm.2}}
\lbl{sub.thm11}

The proof of Theorem \ref{thm.1} implies that for every admissible curve
$\ga$, the torsion function $\tau_{\ga}$ is the germ of an element of 
$\BQ(\overline{X}_{M,\SL(2,\BC)})$. Theorem \ref{thm.11} follows from 
Theorem \ref{thm.1} and Lemmas \ref{lem.ch3} and  \ref{lem.ch5}. 

Theorem \ref{thm.2} follows from the fact that $\overline{X}_{M,\SL(2,\BC)}$
is an affine complex curve, and its field of rational functions has
transcendence degree $1$. In addition, $\tau_{\ga}$ and $\tr_{\ga}$ are
rational functions on $\overline{X}_{M,\SL(2,\BC)}$.

\subsection{The dependence of the Reidemeister torsion on the admissible 
curve and the $A$-polynomial}
\lbl{sub.Atorsion}

In this section, we discuss the dependence of the non-abelian Reidemeister 
torsion on the admissible
curve. Although this discussion is independent of the proof of Theorem 
\ref{thm.1}, it might be useful in other contexts. Recall that the non-abelian
Reidemeister torsion is defined in terms of the twisted chain complex 
in Equation \eqref{eq.twistedC} which is not acyclic. 
Thus, it requires the choice
of distinguished bases $\mathbf{h}_{i}$ for $i=1,2$. 
Such bases can be chosen once
an {\em admissible curve} $\ga \in \pt M$ is chosen; see ~\cite[Chap. 3]{Po}.
Porti proves that for every homotopically non-trivial curve $\gamma$ in 
$\partial M$, the   discrete and faithful representation $\rho_0$ is $\gamma$-{\em regular}.
The same holds for representations $\rho$ near $\rho_0$. A well-known 
application of Thurston's {\em Hyperbolic Dehn Surgery Theorem} implies 
that $\rho_0 \in X_M$ is a smooth point of $X_M$ and that a neighborhood
$U$ of $\rho_0$ is parametrized by the polynomial function $\tr_{\ga}$;
see for example \cite{NZ} and \cite[Cor.~3.28]{Po}.
Choose a meridian-longitude pair $(\mu,\l)$ in $\pt M$, set
$\tr_{\mu}(\rho_t)=e_\m+e_\m^{-1}$, $\tr_{\l}(\rho)=e_\l+e_\l^{-1}$, and 
consider the 
$A$-polynomial $A_M=A_M(e_\m,e_\l) \in \BZ[e_\m^{\pm 1},e_\l^{\pm 1}]$ of $M$.
For a detailed discussion on the $A$-polynomial of $M$ and its relation
to the various views of the character, see the appendix of 
\cite{BDR-V}.

With the above notation, Porti proves that the dependence of the torsion
on the admissible curve $\ga$ is controlled by the $A$-polynomial. More
precisely,   one has \cite[Cor.~4.9,~Prop.~4.7]{Po}:

\begin{eqnarray}
\lbl{eq:changecurve}
\tau_\mu &=& \tau_\lambda \cdot 
\left( \frac{\tr_\lambda^2 - 4}{\tr_\mu^2 - 4}\right)^{1/2} \cdot 
\frac{\partial \tr_\mu}{\partial \tr_\lambda} \\
\lbl{eq.tauml}
& = & \tau_\lambda \cdot (\mathrm{res}^* \circ (\Delta^*)^{-1})\left( 
\frac{e_\l}{e_\m}\frac{\partial A_M/\partial e_\l}{\partial A_M/\partial e_\m}
\right),
\end{eqnarray}
where $\mathrm{res}^*\colon X_{M, \SL(2, \BC)} \to X_{\partial M, \SL(2, \BC)}$ is the 
restriction-map induced by the usual inclusion 
$\partial M \hookrightarrow M$, and $\Delta^*$ works has follows on the 
trace field 
$$
\Delta^*(\tr_\gamma) = e_\gamma + {e_\gamma}^{-1}.
$$

\subsection*{Acknowledgment}
A first draft of the paper was discussed during a workshop on the Volume 
Conjecture in Strasbourg 2007. The authors wish to thank their organizers, 
S. Baseilhac, F. Costantino and G. Massuyeau for their hospitality,
and M. Heusener, R. Kashaev and W. Neumann for enlightening conversations. 
S.G. wishes to thank N. Dunfield for numerous useful conversations, 
suggestions and for a careful reading of a first draft.

\bibliographystyle{hamsalpha}\bibliography{biblio}
\end{document}